\newtheorem{theorem}{Theorem}[section]                                             %
\newtheorem{corollary}{Corollary}[theorem]                                         %
\newtheorem{lemma}[theorem]{Lemma}                                                 %
\newtheorem{proposition}{Proposition}                                              %
\newtheorem{definition}{Definition}                                                %
\newtheorem{example}{Example}                                                      %
\newtheorem{remark}{Remark}                                                        %
\newcommand{\naturals}{\mathbb{N}}                                                 %
\newcommand{\reals}{\mathbb{R}}                                                    %
\newcommand{\euclideanspace}[1]{\mathbb{R}^{#1}}                                   %
\newcommand{\hyperbolicspace}[1]{\mathbb{H}^{#1}}                                  %
\newcommand{\roundsphere}[1]{\mathbb{S}^{#1}}                                      %
\DeclarePairedDelimiter\abs{\lvert}{\rvert}                                        %
\newcommand{\norm}[1]{\lVert{#1}\rVert}                                            %
\DeclarePairedDelimiter\innerproduct{\langle}{\rangle}                             %
\NewDocumentCommand{\smoothfunctions}{d ()}{                                       %
  \IfValueTF{#1}{                                                                  %
    C^{\infty}{\left(#1\right)}                                                    %
  }{                                                                               %
    C^{\infty}                                                                     %
  }                                                                                %
}                                                                                  %
\NewDocumentCommand{\smoothvectorfields}{d ()}{                                    %
  \IfValueTF{#1}{                                                                  %
    \mathfrak{X}^{\infty}{\left(#1\right)}                                         %
  }{                                                                               %
    \mathfrak{X}^{\infty}                                                          %
  }                                                                                %
}                                                                                  %
\NewDocumentCommand{\diffeomorphisms}{d ()}{                                       %
  \IfValueTF{#1}{                                                                  %
    \mathcal{D}\left(#1\right)                                                     %
  }{                                                                               %
    \mathcal{D}                                                                    %
  }                                                                                %
}                                                                                  %
\NewDocumentCommand{\riemannianmetrics}{d ()}{                                     %
  \IfValueTF{#1}{                                                                  %
    \mathcal{M}\left(#1\right)                                                     %
  }{                                                                               %
    \mathcal{M}                                                                    %
  }                                                                                %
}                                                                                  %
\NewDocumentCommand{\gradientfield}{d <> m}{                                       %
  \IfValueTF{#1}{                                                                  %
    \textup{grad}_{#1}\left(#2\right)                                              %
  }{                                                                               %
    \textup{grad}\left(#2\right)                                                   %
  }                                                                                %
}                                                                                  %
\NewDocumentCommand{\hessian}{d <> m d [] d ()}{                                   %
  \IfValueTF{#1}{                                                                  %
    \IfValueTF{#3}{                                                                %
      \IfValueTF{#4}{                                                              %
        \textup{Hess}_{#1}\left(#2\right)\left(#3,#4\right)                        %
      }{                                                                           %
        \textup{Hess}_{#1}\left(#2\right)\left(#3,\cdot\right)                     %
      }                                                                            %
    }{                                                                             %
      \IfValueTF{#4}{                                                              %
        \textup{Hess}_{#1}\left(#2\right)\left(\cdot,#4\right)                     %
      }{                                                                           %
        \textup{Hess}_{#1}\left(#2\right)                                          %
      }                                                                            %
    }                                                                              %
  }{                                                                               %
    \IfValueTF{#3}{                                                                %
      \IfValueTF{#4}{                                                              %
        \textup{Hess}\left(#2\right)\left(#3,#4\right)                             %
      }{                                                                           %
        \textup{Hess}\left(#2\right)\left(#3,\cdot\right)                          %
      }                                                                            %
    }{                                                                             %
      \IfValueTF{#4}{                                                              %
        \textup{Hess}\left(#2\right)\left(\cdot,#4\right)                          %
      }{                                                                           %
        \textup{Hess}\left(#2\right)                                               %
      }                                                                            %
    }                                                                              %
  }                                                                                %
}                                                                                  %
\NewDocumentCommand{\laplacian}{d <> m}{                                           %
  \IfValueTF{#1}{                                                                  %
    \left(\Delta{#2}\right)_{#1}                                                   %
  }{                                                                               %
    \Delta{#2}                                                                     %
  }                                                                                %
}                                                                                  %
\NewDocumentCommand{\driftedlaplacian}{d <> m m}{                                  %
  \IfValueTF{#1}{                                                                  %
    \left(\Delta_{#2}{#3}\right)_{#1}                                              %
  }{                                                                               %
    \Delta_{#2}{#3}                                                                %
  }                                                                                %
}                                                                                  %
\newcommand{\liederivative}[2]{\mathcal{L}_{#2}{#1}}                               %
\NewDocumentCommand{\curvaturetensor}{d <> m m d ()}{                              %
  \IfValueTF{#1}{                                                                  %
    \IfValueTF{#4}{                                                                %
      \textup{Rm}_{#1}\left(#2,#3\right){#4}                                       %
    }{                                                                             %
      \textup{Rm}_{#1}\left(#2,#3\right)                                           %
    }                                                                              %
  }{                                                                               %
    \IfValueTF{#4}{                                                                %
      \textup{Rm}\left(#2,#3\right){#4}                                            %
    }{                                                                             %
      \textup{Rm}\left(#2,#3\right)                                                %
    }                                                                              %
  }                                                                                %
}                                                                                  %
\NewDocumentCommand{\riccitensor}{d <> d [] d ()}{                                 %
  \IfValueTF{#1}{                                                                  %
    \IfValueTF{#2}{                                                                %
      \IfValueTF{#3}{                                                              %
        \textup{Rc}_{#1}\left(#2,#3\right)                                         %
      }{                                                                           %
        \textup{Rc}_{#1}\left(#2,\cdot\right)                                      %
      }                                                                            %
    }{                                                                             %
      \IfValueTF{#3}{                                                              %
        \textup{Rc}_{#1}\left(\cdot,#3\right)                                      %
      }{                                                                           %
        \textup{Rc}_{#1}                                                           %
      }                                                                            %
    }                                                                              %
  }{                                                                               %
    \IfValueTF{#2}{                                                                %
      \IfValueTF{#3}{                                                              %
        \textup{Rc}\left(#2,#3\right)                                              %
      }{                                                                           %
        \textup{Rc}\left(#2,\cdot\right)                                           %
      }                                                                            %
    }{                                                                             %
      \IfValueTF{#3}{                                                              %
        \textup{Rc}\left(\cdot,#3\right)                                           %
      }{                                                                           %
        \textup{Rc}                                                                %
      }                                                                            %
    }                                                                              %
  }                                                                                %
}                                                                                  %
\NewDocumentCommand{\scalarcurvature}{d <>}{                                       %
  \IfValueTF{#1}{                                                                  %
    \textup{R}_{#1}                                                                %
  }{                                                                               %
    \textup{R}                                                                     %
  }                                                                                %
}                                                                                  %
\newcommand{\ambientspace}[2]{\mathbb{M}_{#1}^{#2}}                                %
\newcommand{\submanifold}{\textup{M}^{n}}                                          %
\newcommand{\tangentspace}[2]{\textup{T}_{#1}#2}                                   %
\newcommand{\normalspace}[2]{\textup{T}^{\perp}_{#1}#2}                            %
\newcommand{\normalfield}{\textup{N}}                                              %
\NewDocumentCommand{\ambientconn}{d [] d ()}{                                      %
  \IfValueTF{#1}{                                                                  %
    \IfValueTF{#2}{                                                                %
      \bar{\nabla}_{#1}{#2}                                                        %
    }{                                                                             %
      \bar{\nabla}_{#1}                                                            %
    }                                                                              %
  }{                                                                               %
    \IfValueTF{#2}{                                                                %
      \bar{\nabla}{#2}                                                             %
    }{                                                                             %
      \bar{\nabla}                                                                 %
    }                                                                              %
  }                                                                                %
}                                                                                  %
\NewDocumentCommand{\submanifoldconn}{d [] d ()}{                                  %
  \IfValueTF{#1}{                                                                  %
    \IfValueTF{#2}{                                                                %
      \nabla_{#1}{#2}                                                              %
    }{                                                                             %
      \nabla_{#1}                                                                  %
    }                                                                              %
  }{                                                                               %
    \IfValueTF{#2}{                                                                %
      \nabla{#2}                                                                   %
    }{                                                                             %
      \nabla                                                                       %
    }                                                                              %
  }                                                                                %
}                                                                                  %
\NewDocumentCommand{\secondfundamentalform}{d [] d ()}{                            %
  \IfValueTF{#1}{                                                                  %
    \IfValueTF{#2}{                                                                %
      \textup{h}\left(#1,#2\right)                                                 %
    }{                                                                             %
      \textup{h}\left(#1,\cdot\right)                                              %
    }                                                                              %
  }{                                                                               %
    \IfValueTF{#2}{                                                                %
      \textup{h}\left(\cdot,#2\right)                                              %
    }{                                                                             %
      \textup{h}                                                                   %
    }                                                                              %
  }                                                                                %
}                                                                                  %
\NewDocumentCommand{\shapeoperator}{d []}{                                         %
  \IfValueTF{#1}{                                                                  %
    \textup{A}{#1}                                                                 %
  }{                                                                               %
    \textup{A}                                                                     %
  }                                                                                %
}                                                                                  %
\NewDocumentCommand{\trace}{d ()}{                                                 %
  \IfValueTF{#1}{                                                                  %
    \textup{tr}{\left(#1\right)}                                                   %
  }{                                                                               %
    \textup{tr}                                                                    %
  }                                                                                %
}                                                                                  %
\newcommand{\meancurvature}{\textup{H}}                                            %
\newcommand{\identityoperator}{\textup{id}}                                        %
\newcommand{\tracelessshapeoperator}{\Phi}                                         %
\NewDocumentCommand{\ambientriccitensor}{d <> d [] d ()}{                          %
  \IfValueTF{#1}{                                                                  %
    \IfValueTF{#2}{                                                                %
      \IfValueTF{#3}{                                                              %
        \bar{\textup{R}}\textup{c}_{#1}\left(#2,#3\right)                          %
      }{                                                                           %
        \bar{\textup{R}}\textup{c}_{#1}\left(#2,\cdot\right)                       %
      }                                                                            %
    }{                                                                             %
      \IfValueTF{#3}{                                                              %
        \bar{\textup{R}}\textup{c}_{#1}\left(\cdot,#3\right)                       %
      }{                                                                           %
        \bar{\textup{R}}\textup{c}_{#1}                                            %
      }                                                                            %
    }                                                                              %
  }{                                                                               %
    \IfValueTF{#2}{                                                                %
      \IfValueTF{#3}{                                                              %
        \bar{\textup{R}}\textup{c}\left(#2,#3\right)                               %
      }{                                                                           %
        \bar{\textup{R}}\textup{c}\left(#2,\cdot\right)                            %
      }                                                                            %
    }{                                                                             %
      \IfValueTF{#3}{                                                              %
        \bar{\textup{R}}\textup{c}\left(\cdot,#3\right)                            %
      }{                                                                           %
        \bar{\textup{R}}\textup{c}                                                 %
      }                                                                            %
    }                                                                              %
  }                                                                                %
}                                                                                  %
\title{Classification of gradient Yamabe soliton hypersurfaces of space forms}     %
\author[,1]{Willian Isao Tokura\thanks{email: williantokura@ufgd.edu.br}}          %
\author[,2]{Marcelo Bezerra Barboza \thanks{email: bezerra@ufg.br}}                %
\affil[1]{Universidade Federal da Grande Dourados}                                 %
\affil[2]{Universidade Federal de Goiás}                                           %
\date{\today}                                                                      %
\begin{document}

\maketitle

\begin{abstract}
  In this paper we investigate gradient Yamabe solitons, either shrinking or
  steady, that can be isometrically immersed into space forms as hypersurfaces
  that admit an upper bound on the norm of their second fundamental form. Those
  solitons satisfying an additional condition, that could be constant mean
  curvature or the number of critical points of the potential function being at
  most one, are fully classified. Our argument is based on the weak Omori-Yau
  principle for the drifted Laplacian on Riemannian manifolds.
\end{abstract}

\section{Introduction}\label{sec:introduction}

Given an \(n\)-dimensional differentiable manifold \(M\), let's write
\(\diffeomorphisms\) for the group of diffeomorphisms of \(M\) and
\(\riemannianmetrics\) for the space of Riemannian metrics on \(M\). Recall the
right action of \(\diffeomorphisms\) on \(\riemannianmetrics\) by pullbacks
\[
  \riemannianmetrics\times{\diffeomorphisms}\to{\riemannianmetrics},
  \quad
  (g,\phi)\mapsto{\phi_{\ast}(g)}.
\]
The Yamabe flow~\cite{hamilton1988ricciflow} starting from a given Riemannian
manifold \((M,g)\) is a \(1\)-parameter family
\(\interval{0}{T}\ni{t}\mapsto{g^{t}}\in{\riemannianmetrics}\) of Riemannian
metrics such that
\begin{equation}\label{eq:yamabe-flow}
  \left\{
    \begin{array}{rcl}
      \dfrac{\partial{g^{t}}}{\partial{t}}
      & = &
      -\scalarcurvature^{t}\cdot{g^{t}},
      \\ [0.3cm]
      g^{0}
      & = &
      g,
    \end{array}
    \right.
\end{equation}
where \(\scalarcurvature^{t}\) is the scalar curvature of \(g^{t}\). Yamabe
solitons are self-similar solutions of~\eqref{eq:yamabe-flow}, meaning that
\begin{equation}\label{eq:yamabe-soliton-1}
  \left\{
    \begin{array}{lcl}
      g^{t}      & = & \alpha^{t}\cdot\psi^{t}_{\ast}{\left(g^{0}\right)}, \\
      \alpha^{0} & = & 1,                                                  \\
      \psi^{0}   & = & \textup{id}_{M},
    \end{array}
    \right.
\end{equation}
for certain smooth function
\(\interval{0}{T}\ni{t}\mapsto{\alpha^{t}}\in{\ointerval{0}{\infty}}\) and
\(1\)-parameter family
\(\interval{0}{T}\ni{t}\mapsto{\psi^{t}}\in{\diffeomorphisms}\) of
diffeomorphisms of \(M\).

By differentiating the first equation of~\eqref{eq:yamabe-soliton-1} with
respect to \(t\) and evaluating the expression then obtained at \(t=0\) we find
yet another way of describing a Yamabe soliton.

\begin{definition}\label{def:yamabe-soliton}
  A Yamabe soliton is a Riemannian manifold \((M^{n},g)\) together with a
  vector field \(X\in{\smoothvectorfields(M)}\) and a real number
  \(\lambda\in{\reals}\) satisfying
  \begin{equation}\label{eq:yamabe-soliton-2}
    \frac{1}{2}
    \liederivative{g}{X}
    =
    (\scalarcurvature-\lambda)g
    ,
  \end{equation}
  where \(\liederivative{g}{X}\) is the Lie derivative of \(g\) with respect to
  \(X\) and \(\scalarcurvature\) is the scalar curvature of \(g\). It's called
  shrinking if \(\lambda>0\), steady if \(\lambda=0\) or expanding if
  \(\lambda<0\). If \(\liederivative{g}{X}=0\), it's called trivial. We write
  the Yamabe soliton given by~\eqref{eq:yamabe-soliton-2} like
  \((M^{n},g,X,\lambda)\) in order to emphasize all of its components.
\end{definition}

The next definition deals with those Yamabe solitons \((M,g,X,\lambda)\) in
which the vector field \(X\) is the gradient field of some scalar function
\(f\in{\smoothfunctions(M)}\).

\begin{definition}\label{def:gradient-yamabe-soliton}
  A gradient Yamabe soliton is a Yamabe soliton \((M^{n},g,X,\lambda)\) in
  which \(X=\gradientfield<g>{f}\) is the gradient field of some function
  \(f\in{\smoothfunctions(M)}\), called potential function. In this case,
  equation~\eqref{eq:yamabe-soliton-2} becomes
  \begin{equation}\label{eq:yamabe-soliton-3}
    \hessian<g>{f}=(\scalarcurvature-\lambda)g.
  \end{equation}
  The classification of Yamabe solitons according to the sign of \(\lambda\)
  applies also to gradient Yamabe solitons. A gradient Yamabe soliton is
  trivial in case \(\hessian<g>{f}=0\). Finally, we write a gradient Yamabe
  soliton like \((M^{n},g,f,\lambda)\).
\end{definition}

Many efforts have been made in order to classify the geometry of Yamabe solitons
that can be isometrically immersed into space forms. For instance, Chen and
Deshmukh~\cite{chen2018yamabe} studied Yamabe solitons whose soliton field is
the tangent component of the position vector on Euclidean space, classifying
such objects.

Seko and Maeta~\cite{seko2019classification} showed that if the soliton vector
field of a Yamabe soliton hypersurface of an Euclidean space is concurrent then
the soliton must be contained in either a hyperplane or a sphere. Fujii and
Maeta~\cite{fujii2021classification} extended Seko and Maeta's classification
for other gradient Yamabe solitons.

Suh~\cite{suh2023yamabe} completely classifies both Yamabe solitons and gradient
Yamabe solitons on real hypersurfaces of complex the hyperbolic space
\(\mathbb{C}H^{n}\). Tokura et al.~\cite{tokura2020immersion} investigates
gradient almost Yamabe solitons immersed into warped products of the kind
\(\reals\times_{h}M^{n}\) such that the potential function is the height
function with respect to some given direction. As a result, they were able to
classify rotationally symmetric gradient almost Yamabe soliton hypersurfaces of
\(\reals\times_{h}\euclideanspace{n}\).

These works tells us just how deeply the geometry of a Yamabe soliton is
governed by its vector field. Nevertheless, it's desireable to work with
gradient Yamabe soliton hypersurfaces without extra conditions on the soliton
field. This is precisely what we propose with this paper, to study gradient
Yamabe solitons $(M^{n},g,f,\lambda)$ isometrically immersed into space forms
with no further assumptions about the gradient field $\gradientfield{f}$.

The paper is organized as follows. Section~\ref{sec:preliminaries} presents some
examples of gradient Yamabe solitons and establishes the notation to be used
through out the paper. Section~\ref{sec:oy} provides the weak Omori-Yau maximum
principle, proving it to hold true also for gradient Yamabe soliton
hypersurfaces. We use it to determine the sign of the scalar curvature of
gradient Yamabe solitons, both shrinking and steady. Finally,
section~\ref{sec:classification} classifies the gradient Yamabe soliton
hypersurfaces of space forms such that its mean curvature is constant or else
the potential function has at most one critical point.

\section{Preliminaries and examples}\label{sec:preliminaries}

Let \(\ambientspace{c}{n+1}\) be a simply connected Riemannian
\((n+1)\)-manifold of constant sectional curvature \(c\), where
\(c\in{\left\{-1,0,1\right\}}\). In order to make it precise, let
\(\euclideanspace{n+2}_{\nu}\) be the \((n+2)\)-dimensional real vector space
\[
  \euclideanspace{n+2}
  =
  \left\{
    (u_{1},\ldots,u_{n+2})
    :
    u_{1},\ldots,u_{n+2}\in{\reals}
  \right\},
\]
endowed with either the inner product
\[
  \innerproduct{u,v}
  =
  \sum\limits_{i=1}^{n+2}u_{i}v_{i}
  ,
\]
if \(\nu=0\), or else with the non-degenerate symmetric
bilinear form
\[
  \innerproduct{u,v}
  =
  -
  u_{1}v_{1}
  +
  \sum\limits_{i=2}^{n+2}u_{i}v_{i}
  ,
\]
if \(\nu=1\), where \(u=(u_{1},\ldots,u_{n+2})\in{\euclideanspace{n+2}}\) and
\(v=(v_{1},\ldots,v_{n+2})\in{\euclideanspace{n+2}}\). Then, we take
\[
  \begin{array}{lllll}
    \ambientspace{-1}{n+1} & = & \hyperbolicspace{n+1} & \equiv & \left\{u\in{\euclideanspace{n+2}}:\innerproduct{u,u}=-1,\,\innerproduct{u,e_{1}}>0\right\}\subset{\euclideanspace{n+2}_{1}}\\
    \ambientspace{0}{n+1}  & = & \euclideanspace{n+1}  & \equiv & \left\{u\in{\euclideanspace{n+2}}:\innerproduct{u,e_{n+2}}=0\right\}\subset{\euclideanspace{n+2}_{0}}                      \\
    \ambientspace{1}{n+1}  & = & \roundsphere{n+1}     & \equiv & \left\{u\in{\euclideanspace{n+2}}:\innerproduct{u,u}=1\right\}\subset{\euclideanspace{n+2}_{0}}
  \end{array}
\]
all three of them munished with the induced metric from the surrounding space,
where \(e_{1},\ldots,e_{n+2}\in{\euclideanspace{n+2}}\) stand for the canonical
vectors as usual.

Let \(\phi:\submanifold\to{\ambientspace{c}{n+1}}\) be an isometric immersion
of a connected and oriented Riemannian \(n\)-manifold \(\submanifold\) into
$\ambientspace{c}{n+1}$. For all local formulas and computations we may
consider \(\phi\) as an imbedding and thus identify \(x\in{\submanifold}\) with
\(\phi(x)\in{\ambientspace{c}{n+1}}\). The tangent space
\(\tangentspace{x}{\submanifold}\) is identified with a subspace of the tangent
space \(\tangentspace{x}{\ambientspace{c}{n+1}}\) and the normal space
\(\normalspace{x}{\submanifold}\) is the subspace of
\(\tangentspace{x}{\ambientspace{c}{n+1}}\) consisting of all
\(X\in{\tangentspace{x}{\ambientspace{c}{n+1}}}\) which are orthogornal to
\(\tangentspace{x}{\submanifold}\) with respect to the Riemannian metric of
\(\ambientspace{c}{n+1}\).

The second fundamental form \(\secondfundamentalform\) and the shape operator
\(\shapeoperator\) are related to the covariant differentiations
\(\ambientconn\) of \(\ambientspace{c}{n+1}\) and \(\submanifoldconn\) of
\(\submanifold\) by the following formulas:
\begin{equation}\label{eq:second-fundamental-form}
	\ambientconn[X](Y)
  =
  \submanifoldconn[X](Y)
  +
  \secondfundamentalform[X](Y)
  \quad
  \left(\forall{X,Y\in{\smoothvectorfields(\submanifold)}}\right),
\end{equation}
\begin{equation}\label{eq:shape-operator}
  \ambientconn[X](\normalfield)
  =
  -\shapeoperator[X]
  \quad
  \left(\forall{X\in{\smoothvectorfields(\submanifold)}}\right),
\end{equation}
where \(\normalfield\in{\smoothvectorfields(\ambientspace{c}{n+1})}\) is a
field of unit vectors normal to \(\submanifold\). The Gauss equation is given
by
\begin{equation}\label{eq:gauss-equation}
  \curvaturetensor{X}{Y}Z
  =
  c(X\wedge{Y})Z
  -
  (\shapeoperator[X]\wedge{\shapeoperator[Y]})Z,
  \quad
  \left(\forall{X,Y,Z\in{\smoothvectorfields(\submanifold)}}\right),
\end{equation}
where \(X\wedge{Y}\) is the skew-symmetric endomorphism of
\(\smoothvectorfields(\submanifold)\) given by
\[
  \smoothvectorfields(\submanifold)\to{\smoothvectorfields(\submanifold)},
  \quad
  Z\mapsto{(X\wedge{Y})Z=\innerproduct{X,Z}Y-\innerproduct{Y,Z}X}.
\]
The mean curvature of the immersion is
\(\meancurvature=\frac{1}{n}\trace(\shapeoperator)\) and the Ricci curvature of
\(\submanifold\) at a given direction then is
\begin{equation}\label{eq:ricci-curvature}
  \riccitensor[X](X)
  =
  c(n-1)\norm{X}^{2}
  +
  n\meancurvature
  \innerproduct{\shapeoperator[X],X}
  -
  \norm{\shapeoperator[X]}^{2}
  \quad
  \left(\forall{X\in{\smoothvectorfields(\submanifold)}}\right),
\end{equation}
with what we get that the scalar curvature of \(\submanifold\) must be given by
\begin{equation}\label{eq:scalar-curvature}
  \scalarcurvature
  =
  n(n-1)c
  +
  n^{2}\meancurvature^{2}
  -
  \norm{\shapeoperator}^{2}.
\end{equation}
As a final note we would to recall an interesting inequality regarding the
quantities \(\norm{\shapeoperator}\) and \(\meancurvature\). If
\(\tracelessshapeoperator=\shapeoperator-\meancurvature\cdot{\identityoperator}\)
denotes the traceless part of the shape operator, then
\[
  \norm{\tracelessshapeoperator}^{2}
  =
  \norm{\shapeoperator}^{2}-n\,\meancurvature^{2}
  \geqslant{0}
  \implies
  \sqrt{n}\,\abs{\meancurvature}\leqslant{\norm{\shapeoperator}},
\]
with the equality taking place if, and only if, \(\submanifold\) is a totally
umbilical submanifold of \(\ambientspace{c}{n+1}\). We end up this section with
some examples.

\begin{example}[Hyperplanes]
  Let \(\euclideanspace{n}\hookrightarrow\euclideanspace{n+1}\) be an isometric
  immersion. Then, because this is a totally geodesic submanifold with null
  scalar curvature, we get that
  \((\euclideanspace{n},\innerproduct{,},f,\alpha)\) is a gradient Yamabe
  soliton for
  \[
    f(x)
    =
    -
    \frac{\alpha}{2}
    \norm{x}^{2}
    +
    \innerproduct{x,\beta}
    +
    \gamma,
  \]
  for any constants \(\alpha,\gamma\in{\reals}\) and
  \(\beta\in{\euclideanspace{n}}\). When \(\beta=\gamma=0\) this is known as
  the \textit{Gaussian} soliton.
\end{example}

\begin{example}[Horospheres]
  Let \(\euclideanspace{n}\hookrightarrow\hyperbolicspace{n+1}\) be an
  isometric immersion of a horosphere into \(\hyperbolicspace{n+1}\). Then,
  because \(\shapeoperator=\identityoperator\) we also get that
  \((\euclideanspace{n},\innerproduct{,},f,\alpha)\) is a gradient Yamabe
  soliton for
  \[
    f(x)
    =
    -
    \frac{\alpha}{2}
    \norm{x}^{2}
    +
    \innerproduct{x,\beta}
    +
    \gamma,
  \]
  for any constants \(\alpha,\gamma\in\reals\) and
  \(\beta\in\euclideanspace{n}\).
\end{example}

\begin{example}[Cylinders]
  Let
  \(\euclideanspace{k}\times\roundsphere{n-k}(r)\hookrightarrow\euclideanspace{n+1}\)
  be an isometric immerion of a cylinder of radius \(r>0\) into flat Euclidean
  \((n+1)\)-space. Then, because this cylinder has principal curvatures
  \[
    \kappa_{1}=\cdots=\kappa_{k}=0
    \quad
    \text{and}
    \quad
    \kappa_{k+1}=\cdots=\kappa_{n}=\frac{1}{r}
    ,
  \]
  we get that
  \[
    \meancurvature
    =
    \frac{n-k}{nr}
    \qquad
    \text{and}
    \qquad
    \norm{\shapeoperator}^{2}
    =
    \frac{n-k}{r^2}
    .
  \]
  Therefore, the tuple
  \((\euclideanspace{k}\times\roundsphere{n-k}(r),\innerproduct{,},f,\lambda)\)
  is a trivial gradient Yamabe soliton for
  \[
    f:\euclideanspace{k}\times\roundsphere{n-k}(r)\to{\reals},
    \quad
    (x,y)\mapsto{\innerproduct{\alpha,x}}+\beta,
  \]
  for whatever constants \(\alpha\in{\euclideanspace{k}}\) and
  \(\beta\in\reals\) that we take, as long as we set
  \(\lambda=\scalarcurvature=n^{2}\meancurvature^{2}-\norm{\shapeoperator}^{2}\).
\end{example}

\begin{example}[Products of spheres]
  If
  \(\roundsphere{k}(\sqrt{1-r^{2}})\times\roundsphere{n-k}(r)\hookrightarrow\euclideanspace{n+1}\)
  is an isometric immersion of a Riemannian product of spheres into flat
  Euclidean \((n+1)\)-space, where \(r\in\ointerval{0}{1}\), we then get that
  \[
    \kappa_{1}
    =
    \cdots
    =
    \kappa_{k}
    =
    \frac{r}{\sqrt{1-r^{2}}}
    \quad
    \text{and}
    \quad
    \kappa_{k+1}
    =
    \cdots
    =
    \kappa_{n}
    =
    -
    \frac{\sqrt{1-r^{2}}}{r}
    ,
  \]
  are the principal curvatures for this immersion. Therefore, we conclude that
  \[
    \meancurvature
    =
    \frac{nr^{2}-(n-k)}{nr\sqrt{1-r^{2}}}
    \quad
    \text{and}
    \quad
    \norm{\shapeoperator}^{2}
    =
    \frac{kr^{4}+(n-k)(1-r^{2})^{2}}{r^{2}(1-r^{2})}
    .
  \]
  Since compact gradient Yamabe solitons of dimension \(n\geqslant{3}\) must be
  trivial (see~\cite{hsu2012note}), the tuple
  \((\roundsphere{k}(\sqrt{1-r^{2}})\times\roundsphere{n-1}(r),\innerproduct{,},f,\lambda)\)
  satisfies equation~\eqref{eq:yamabe-soliton-2} for a constant \(f\), with
  \(\lambda=\scalarcurvature=n^{2}\meancurvature^{2}-\norm{\shapeoperator}^{2}\).
\end{example}

\begin{example}[Products of hyperbolic spaces and circles]\label{example1}
  Let \(r>0\). If
  \(\hyperbolicspace{k}(-\sqrt{1+r^{2}})\times\roundsphere{n-k}\hookrightarrow\hyperbolicspace{n+1}\)
  is an isometric immersion of the Riemannian product of a hyperbolic
  \(k\)-space and an \((n-k)\)-sphere into the hyperbolic \((n+1)\)-space, then
  we get
  \[
    \kappa_{1}
    =
    \cdots
    =
    \kappa_{k}
    =
    \frac{r}{\sqrt{1+r^{2}}}
    \quad
    \text{and}
    \quad
    \kappa_{k+1}
    =
    \cdots
    \kappa_{n}
    =
    \frac{\sqrt{1+r^{2}}}{r}.
  \]
  for the principal curvatures of this immersion, and it then follows that
  \[
    \meancurvature
    =
    \frac{nr^{2}+(n-k)}{nr\sqrt{1+r^{2}}}
    \quad
    \text{and}
    \quad
    \norm{\shapeoperator}^{2}
    =
    \frac{kr^{4}+(n-k)(1+r^{2})^{2}}{r^{2}(1+r^{2})}
    .
  \]
  Therefore, for a constant $f$, the tuple
  \((\hyperbolicspace{k}(-\sqrt{1+r^{2}})\times\roundsphere{n-k},\innerproduct{,},f,\lambda)\)
  is a trivial gradient Yamabe soliton with
  \(\lambda=\scalarcurvature=-n(n-1)+n^{2}\meancurvature^{2}-\norm{\shapeoperator}^{2}\).
  This soliton is steady if \((n,k)=(2,1)\) and expanding if \((n,k)=(n,n-1)\)
  where \(n>2\). At this point we would like to propose the following question:
  could we have made a gradient Yamabe soliton out of
  \(
    \hyperbolicspace{n-1}(-\sqrt{1+r^{2}})\times\roundsphere{1}
  \)
  in any other way? For example, could we have chosen \(f\) in some class of
  smooth functions other than the constant ones? It follows from Lemma~\ref{thb}
  that, for \(n>2\), the answer is negative for gradient Yamabe solitons of the
  steady and shrinking kinds.
\end{example}

\section{The weak Omori-Yau maximum principle on gradient Yamabe solitons hypersurfaces}\label{sec:oy}

We utilize the Omori-Yau maximum principle to prove our results. Following the
terminology introduced by Pigola, Rigoli and Setti in~\cite{pigola2005maximum},
and also by Alías, Mastrolia and Rigoli in~\cite{alias2016maximum}, the weak
Omori-Yau maximum principle holds for the drifted Laplacian
\(\Delta_{w}=e^w\textup{div}(e^{-w}\nabla)\) on a Riemannian manifold
\((M^{n},g)\) if, for any function \(u\in{C^{2}(M)}\) with
\(u^{\ast}=\sup_{M}u<+\infty\), there exists a sequence of points
\(\{x_{k}\}_{k\in\naturals}\) in \(M\) such that
\[
  \driftedlaplacian{w}{u}(x_{k})<\frac{1}{k}
  \quad\textup{and}\quad
  u(x_{k})>u_{\ast}-\frac{1}{k},
\]
for each \(k\in\naturals\). The weak Omori-Yau maximum has been shown to be true
(see Corollary 8.3 of~\cite{alias2016maximum}) on every complete
\(n\)-dimensional Riemannian manifold satisfying
\begin{equation*}
  \riccitensor[\gradientfield{r}](\gradientfield{r})
  +
  \hessian{w}[\gradientfield{r}](\gradientfield{r})
  \geqslant
  -(n-1)G(r),
\end{equation*}
as well as \(\norm{\gradientfield{w}}^{2}\leqslant{F(r)}\), where
\(r:M\to\rinterval{0}{\infty}\) is the Riemannian distance from a given point,
and \(F,G\) are real \(C^{1}\)-functions on \(\rinterval{0}{\infty}\) with \(F\)
nondecreasing and
\begin{align*}
  &
  \inf_{\rinterval{0}{\infty}}\dfrac{G'}{G^{3/2}}>-\infty,
  \quad
  \frac{1}{\sqrt{F}+\sqrt{G}}\notin{L^{1}(\ointerval{0}{\infty})},
  \\
  &
  \left(\sqrt{F}+\sqrt{G}\right)'(t)\geqslant{-B(\log{t+1})},
\end{align*}
for all large enough \(t\) and some constant \(B\in\rinterval{0}{\infty}\).

It has been brought to our attention by the work~\cite{albanese2013general} of
Albanese, Alías and Rigoli that the validity of the weak Omori–Yau maximum
principle on \((M^{n},g)\) does not depend on curvature bounds as much as one
would expect. For instance, the weak Omori-Yau maximum principle for
\(\Delta_{w}\) remains true on every complete \(n\)-dimensional Riemannian
manifold admitting a non-negative \(C^{2}\)-function \(r\) on \(M\) satisfying
the following requirements:
\begin{enumerate}[a)]
  \item
    \(\lim\limits_{x\to{\infty}}r(x)=\infty\);
  \item
    \(\Delta_{w}r(x)\leqslant G(r(x))\) outside of a compact subset of \(M\),
\end{enumerate}
where \(G\) is a non-negative \(C^{1}\)-function on \(\ointerval{0}{\infty}\)
satisfying
\[
  \dfrac{1}{G}
  \notin
  L^{1}
  \quad\textup{and}\quad
  G'(t)
  \geqslant
  -B(\log t+1),
\]
for all large enough \(t\) and some constant \(B\in\rinterval{0}{\infty}\). The
next result states that the weak Omori-Yau maximum principle remains true for
\(\Delta_{f/[2(n-1)]}\) on \((M^{n},g)\) provided that \(f\) is the potential
function of a gradient Yamabe soliton \((M^{n},g,f,\lambda)\), either shrinking
or steady.

\begin{lemma}\label{tha}
  Let \((M^{n},g,f,\lambda)\) be a complete gradient Yamabe soliton immersed
  into a space form \(\ambientspace{c}{n+1}\). Then, the weak Omori-Yau maximum
  principle holds true for \(\Delta_{\frac{f}{2(n-1)}}\) provided that:
  \begin{enumerate}[a)]
    \item
      \(n=2\);
    \item
      \begin{enumerate}[i.]
        \item
          \(n>2\)
        \item
          \(\sup\limits_{M^{n}}\norm{\shapeoperator}<\infty\).
      \end{enumerate}
  \end{enumerate}
\end{lemma}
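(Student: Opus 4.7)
The plan is to verify the hypotheses of the Albanese--Alías--Rigoli criterion recalled just above the lemma. Setting $w = f/[2(n-1)]$, it suffices to exhibit a non-negative $C^{2}$ function $r : \submanifold \to \rinterval{0}{\infty}$ with $r(x) \to \infty$ at infinity in $\submanifold$, together with an admissible $G$ (i.e., non-negative and $C^{1}$ on $\ointerval{0}{\infty}$, with $1/G \notin L^{1}$ near $\infty$ and $G'(t) \geq -B(\log t + 1)$ for large $t$), such that $\driftedlaplacian{w}{r} \leq G\circ r$ outside a compact set. I would take $r(x) = d_{g}(x, x_{0})$, the intrinsic distance from a fixed base point, regularized in the weak sense at the cut locus via Calabi's trick; completeness of $g$ supplies the properness of $r$, and everything reduces to estimating
\[
  \driftedlaplacian{w}{r} = \laplacian{r} - \frac{1}{2(n-1)}\innerproduct{\gradientfield{f}, \gradientfield{r}}.
\]

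In case (b), the hypothesis $\sup_{\submanifold}\norm{\shapeoperator} < \infty$ combined with the Ricci formula~\eqref{eq:ricci-curvature} yields a uniform lower bound on $\riccitensor$, so the Laplacian comparison theorem gives a constant $C_{0}$ with $\laplacian{r} \leq C_{0}$ outside a compact set. Likewise, the scalar curvature formula~\eqref{eq:scalar-curvature} bounds $\abs{\scalarcurvature - \lambda} \leq C_{1}$, so by the soliton equation~\eqref{eq:yamabe-soliton-3}, $\hessian{f} = (\scalarcurvature - \lambda)g$ has operator norm at most $C_{1}$. Integrating along a minimizing geodesic from $x_{0}$ to $x$, using $\tfrac{d}{dt}\norm{\gradientfield{f}}(\gamma(t)) \leq C_{1}$ at regular points, gives $\norm{\gradientfield{f}}(x) \leq \norm{\gradientfield{f}}(x_{0}) + C_{1}r(x)$. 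Combining,
\[
  \driftedlaplacian{w}{r} \leq C_{0} + \tfrac{1}{2(n-1)}\bigl(\norm{\gradientfield{f}}(x_{0}) + C_{1}r\bigr) =: G(r),
\]
which is of at most linear growth and trivially admissible.

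In case (a), the bound on $\norm{\shapeoperator}$ is dropped but two-dimensional rigidity compensates. For $n = 2$ we have $\riccitensor = Kg$ with $K$ the Gaussian curvature, and the contracted second Bianchi identity applied to~\eqref{eq:yamabe-soliton-3}---which in general yields $(n-1)\gradientfield{\scalarcurvature} = -\riccitensor(\gradientfield{f},\cdot)$---reduces to $\gradientfield{\scalarcurvature} + \tfrac{1}{2}\scalarcurvature\,\gradientfield{f} = 0$ and integrates to the conservation law $\scalarcurvature\, e^{f/2} \equiv C$ for some constant $C$. The identity $d(\norm{\gradientfield{f}}^{2}) = 2(\scalarcurvature - \lambda)\,df$, obtained from the soliton equation, then produces the companion law $\norm{\gradientfield{f}}^{2} + 4Ce^{-f/2} + 2\lambda f \equiv D$ for some constant $D$. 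For shrinking ($\lambda > 0$) or steady ($\lambda = 0$) solitons, these two laws together with completeness of $g$ force $\scalarcurvature = Ce^{-f/2}$ to be uniformly bounded on $\submanifold$: when $C \geq 0$ the companion law alone forces $f$ to be bounded below (so $e^{-f/2}$ is bounded above), while when $C < 0$ one analyzes the autonomous ODE $\phi'' = Ce^{-\phi/2} - \lambda$ satisfied by $\phi := f\circ\gamma$ along a unit-speed geodesic $\gamma$ and argues that $\phi(t) \to -\infty$ would force the blow-down to occur in finite geodesic parameter, contradicting completeness of $g$. Once $\scalarcurvature$ is bounded, the argument of case (b) applies verbatim. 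The main obstacle is precisely this last ODE-plus-completeness step in the sign-indefinite regime $C < 0$, where one really uses that the soliton is shrinking or steady.
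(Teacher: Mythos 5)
Your case (b) is correct and is essentially the paper's own argument in slightly different packaging: the paper likewise reduces to the Albanese--Al\'ias--Rigoli criterion by estimating \(2(n-1)\laplacian{r}+g(\gradientfield{f},\gradientfield{r})\) along minimizing geodesics, uses the Gauss equations \eqref{eq:ricci-curvature}--\eqref{eq:scalar-curvature} together with \(\sup_{M^{n}}\norm{\shapeoperator}<\infty\) to control both the Ricci term and \(\hessian{f}=(\scalarcurvature-\lambda)g\), and ends with an admissible \(G\) of linear growth. The defect is in case (a), where you have missed the one-line observation that makes \(n=2\) work with \emph{no} hypothesis on \(\norm{\shapeoperator}\): in dimension two \(\riccitensor=\tfrac{\scalarcurvature}{2}g\), so for a unit vector \(\alpha'\) the combination that actually controls the drifted Laplacian of \(r\) is
\[
  2(n-1)\riccitensor[\alpha'](\alpha')-\hessian{f}[\alpha'](\alpha')
  =
  \scalarcurvature-(\scalarcurvature-\lambda)
  =
  \lambda,
\]
a constant. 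The drift \(f/[2(n-1)]\) is calibrated precisely so that the soliton equation cancels the Ricci term when \(n=2\); one then gets \(2(n-1)\laplacian{r}+g(\gradientfield{f},\gradientfield{r})\leqslant C+\abs{\lambda}\,r\) with no curvature bounds whatsoever, and the criterion applies for every sign of \(\lambda\).

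Your substitute for this --- proving \(\scalarcurvature\) is bounded via the conservation laws \(\scalarcurvature e^{f/2}\equiv C\) and \(\norm{\gradientfield{f}}^{2}+4Ce^{-f/2}+2\lambda f\equiv D\) and then rerunning case (b) --- is not wrong in spirit, but as submitted it is not a proof of the stated lemma, for two concrete reasons. First, you yourself leave the branch \(C<0\) as an ``obstacle''; it can be closed (for \(\lambda\geqslant 0\) and \(C<0\) one has \(\phi''=Ce^{-\phi/2}-\lambda<0\) along every complete unit-speed geodesic, \(\phi'\) eventually becomes negative, and \((-\phi)''\geqslant\abs{C}e^{(-\phi)/2}\) then forces \(\phi\to-\infty\) in finite parameter, contradicting that \(f\circ\gamma\) is defined on all of \(\reals\)), but you must actually carry this out rather than name it. Second, your entire case (a) argument presupposes \(\lambda\geqslant 0\): for an expanding soliton neither conservation law forces \(f\) to be bounded below, so boundedness of \(\scalarcurvature=Ce^{-f/2}\) is not established, whereas the lemma is stated for arbitrary \(\lambda\) and the cancellation above handles that case for free. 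So case (a) as written is both incomplete at the step you flag and strictly weaker than the statement you are asked to prove.
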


\begin{proof}[Proof of Lemma~\ref{tha}]
  Let \(n>2\). Take some \(x_{0}\in{M^{n}}\) and let \(r(x)=d(x,x_{0})\) be the
  Riemannian distance from \(x\) to \(x_{0}\). Since \(M^{n}\) is complete,
  there exists a minimizing geodesic \(\alpha:\interval{0}{r(x)}\to{M^{n}}\)
  from \(x_{0}\) to \(x\). Without loss of generality, we may assume that \(r\)
  is smooth at \(x\). Let \(\mathcal{B}=\{e_{1},\ldots,e_{n}\}\) be an
  orthonormal basis at \(x_{0}\) with \(\alpha'(0)=e_{1}\) and then take
  \(e_{1}(s),\ldots,e_{n}(s)\) to be the parallel displacement of the
  \(e_{1},\ldots,e_{n}\) from \(x_{0}\) up to \(\alpha(s)\). For each
  \(j\in{\left\{1,\ldots,n\right\}}\), let \(X_{j}(s)\) be the Jacobian field
  along \(\alpha(s)\) which is given by:
  \[
    \left\{
    \begin{array}{rcl}
      X_{j}(0)    & = & 0, \\
      X_{j}(r(x)) & = & e_{j}(r(x)).
    \end{array}
    \right.
  \]
  Then, we get that
  \[
    \laplacian{r}(x)
    =
    \sum_{j=1}^{n}
    \int_{0}^{r(x)}
    \left(
    \norm{X_{j}'(s)}^{2}
    -
    R(\alpha'(s),X_{j}(s),\alpha'(s),X_{j}(s))
    \right)
    ds.
  \]
  Let's take some \(r_{0}\in{\ointerval{0}{r(x)}}\) such that
  \[
    \norm{\riccitensor}\leqslant{(n-1)\,r_{0}^{-2}}
    \quad\textup{and}\quad
    \norm{\gradientfield{f}}(x_{0})\leqslant{(n-1)\,r_{0}^{-1}},
  \]
  on \(B(x_{0},r_{0})\subset{M^{n}}\) and define vector fields
  \[
    Y_{1}(s)=X_{1}(s)
    \quad\textup{and}\quad
    Y_{j}(s)=c(s)e_{j}(s)
    \quad(j\geqslant{2}),
  \]
  along \(\alpha:\interval{0}{r(x)}\to{M}\), where
  \(c:\interval{0}{r(x)}\to\reals\) is given by
  \begin{equation*}
    c(s) =
    \begin{cases}
      r_{0}^{-1}s, & \text{if }s\in\interval{0}{r_{0}}, \\
      1,           & \text{if }s\in\interval{r_{0}}{r(x)}.
    \end{cases}
  \end{equation*}
  It follows from the standard index comparison theorem that
  \begin{equation*}
    \begin{split}
      \laplacian{r}(x)
      &
      \leqslant
      \sum_{j=1}^{n}
        \int_{0}^{r(x)}
        \left(
          \norm{Y_{j}'(s)}^{2}
          -
          R(\alpha'(s),Y_{j}(s),\alpha'(s),Y_{j}(s))
        \right)
        ds
      \\
      &
      \leqslant
      \int_{0}^{r_{0}}
        \left[
          \frac{n-1}{r_{0}^{2}}
          +
          \left(
            1-\frac{s^{2}}{r_{0}^{2}}
          \right)
          \riccitensor[\alpha'(s)](\alpha'(s))
        \right]
      ds
      \\
      &
      \hspace{2.6cm}
      -
      \int_{0}^{r(x)}
      \riccitensor[\alpha'(s)](\alpha'(s))
      ds
      \\
      & \leqslant
      \frac{n-1}{r_{0}}
      +
      \frac{2}{3}\frac{(n-1)^{2}}{r_{0}}
      -
      \int_{0}^{r(x)}
      \riccitensor[\alpha'(s)](\alpha'(s))
      ds.
    \end{split}
  \end{equation*}
  Also, we get from the Yamabe soliton equation~\eqref{eq:yamabe-soliton-1} that
  \begin{equation*}
    \begin{split}
      g(\gradientfield{f},\gradientfield{r})(x)
      & =
      \int_{0}^{r(x)}
      \frac{d}{ds}
      g(\gradientfield{f},\gradientfield{r})
      ds
      \\
      &
      \hspace{2.5cm}
      +
      g(\gradientfield{f},\gradientfield{r})(x_{0})
      \\
      & =
      \int_{0}^{r(x)}
      \hessian{f}[\alpha'(s)](\alpha'(s))
      ds
      \\
      &
      \hspace{2.5cm}
      +
      g(\gradientfield{f},\gradientfield{r})(x_{0})
      \\
      & \leqslant
      \int_{0}^{r(x)}
      \scalarcurvature
      ds
      +
      \norm{\gradientfield{f}}(x_{0})
      -
      \lambda{r(x)}.
    \end{split}
  \end{equation*}
  Then, we get that
  \begin{equation}\label{estimativa}
    \begin{split}
      &
      2(n-1)\laplacian{r}(x)
      +
      g(\gradientfield{f},\gradientfield{r})(x)
      \\
      &
      \hspace{0.5cm}
      \leqslant
      \frac{C_{n}}{r_{0}}
      -
      2(n-1)
      \int_{0}^{r(x)}
      \riccitensor[\alpha'(s)](\alpha'(s))
      ds
      +
      \int_{0}^{r(x)}
      \scalarcurvature
      ds
      -
      \lambda{r(x)},
    \end{split}
  \end{equation}
  for every \(x\in{M^{n}\setminus{B(x_{0},r_{0})}}\), where \(C_{n}\) is a
  constant that depends on \(n\).


  On the other hand, it follows from~\eqref{eq:ricci-curvature}
  and~\eqref{eq:scalar-curvature} that
  \begin{equation*}
    \begin{split}
      &
      2(n-1)
      \laplacian{r}(x)
      +
      g(\gradientfield{f},\gradientfield{r})(x)
      \\
      &
      \hspace{0.5cm}
      \leqslant
      \frac{C_{n}}{r_{0}}
      -
      2(n-1)
      \int_{0}^{r(x)}
      \riccitensor[\alpha'(s)](\alpha'(s))
      ds
      +
      \int_{0}^{r(x)}
      \scalarcurvature
      ds
      -
      \lambda{r(x)}
      \\
      &
      \hspace{0.5cm}
      \leqslant
      \frac{C_{n}}{r_{0}}
      -
      (n-1)(n-2)cr(x)
      +
      \int_{0}^{r(x)}
      \left(
        n^{2}\meancurvature^{2}
        -
        \norm{\shapeoperator}^{2}
        \right)
      ds
      -
      \lambda{r(x)}
      \\
      &
      \hspace{1.0cm}
      -
      \int_{0}^{r(x)}
      \left(
        2n(n-1)
        \meancurvature
        g(\shapeoperator[\alpha'(s]),\alpha'(s))
        -
        2(n-1)\norm{\shapeoperator\alpha'(s)}^{2}
      \right)
      ds
      \\
      &
      \hspace{0.5cm}
      \leqslant
      \frac{C_{n}}{r_{0}}
      -
      (n-1)(n-2)cr(x)
      +
      \int_{0}^{r(x)}
      \left(
        n^{2}\meancurvature^{2}
        -
        \norm{\shapeoperator}^{2}
        \right)
      ds
      -
      \lambda{r(x)}
      \\
      &
      \hspace{1.0cm}
      +
      \int_{0}^{r(x)}
      \left(
        2n(n-1)
        \abs{\meancurvature}
        \norm{\shapeoperator}
        +
        2(n-1)
        \norm{\shapeoperator}^{2}
        \right)
      ds
      \\
      &
      \hspace{0.5cm}
      \leqslant
      \frac{C_{n}}{r_{0}}
      -
      (n-1)(n-2)cr(x)
      +
      \int_{0}^{r(x)}
      \left(
        n^{2}\meancurvature^{2}
        -
        \norm{\shapeoperator}^{2}
        \right)
      ds
      -
      \lambda{r(x)}
      \\
      &
      \hspace{1.0cm}
      +
      \int_{0}^{r(x)}
      \left(
        n(n-1)\meancurvature^{2}
        +
        n(n-1)
        \norm{\shapeoperator}^{2}
        +
        2(n-1)
        \norm{\shapeoperator}^{2}
      \right)
      ds
      \\
      &
      \hspace{0.5cm}
      \leqslant
      \frac{C_{n}}{r_{0}}
      -
      (n-1)(n-2)cr(x)
      +
      (n-1)(n+4)
      \sup\limits_{M}\norm{\shapeoperator}^{2}
      r(x)
      -
      \lambda{r(x)}
      \\
      &
      \hspace{0.5cm}
      \leqslant
      \frac{C_{n}}{r_{0}}+\abs{K}r(x).
    \end{split}
  \end{equation*}
  where
  \[
    K
    =
    -(n-1)(n-2)c
    +
    (n-1)(n+4)\sup\limits_{M^{n}}{\norm{\shapeoperator}^{2}}
    -
    \lambda.
  \]
  Hence, from Theorem A and Remark 1.1 of~\cite{albanese2013general}, the weak
  Omori-Yau maximum principle holds for the drifted Laplacian
  \(\Delta_{f/2(n-1)}\).

  Now, let \(n=2\). Then, because we have
  \[
    \riccitensor[X](X)=\frac{\scalarcurvature}{2}g(X,X)
    \quad(\forall{X\in\smoothvectorfields(M)}),
  \]
  the inequality~\eqref{estimativa} takes the form
  \begin{equation*}
    2(n-1)
    \laplacian{r}(x)
    +
    g(\gradientfield{f},\gradientfield{r})(x)
    \leqslant
    \frac{C_{n}}{r_{0}}.
  \end{equation*}
  Then, once again by Theorem A of~\cite{albanese2013general}, the weak
  Omori-Yau maximum principle holds for the drifted Laplacian \(\Delta_{f/2}\).
\end{proof}

\begin{lemma}[\cite{chow1992yamabe,daskalopoulos2013classification,ma2012remarks}]
  Let \((M^{n},g,f,\lambda)\) be a gradient Yamabe soliton. Then
  \begin{equation*}
    2(n-1)
    \laplacian{\scalarcurvature}
    +
    g(\gradientfield{f},\gradientfield{\scalarcurvature})
    +
    2\scalarcurvature(\scalarcurvature-\lambda)
    =
    0.
  \end{equation*}
  \label{Lemma1}
\end{lemma}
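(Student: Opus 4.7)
The plan is to derive the claimed identity from the gradient Yamabe soliton equation $\hessian<g>{f}=(\scalarcurvature-\lambda)g$ by combining three standard ingredients: the trace of the soliton equation, the commutation identity $\Delta(\gradientfield{f})=\gradientfield{\laplacian{f}}+\riccitensor[\gradientfield{f}]$ for the rough Laplacian acting on the exact $1$-form $df$, and the contracted second Bianchi identity $\textup{div}(\riccitensor)=\tfrac{1}{2}d\scalarcurvature$.

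First I would take the trace of the soliton equation with respect to $g$ to obtain $\laplacian{f}=n(\scalarcurvature-\lambda)$. Applying the divergence operator to the Hessian $\hessian{f}=(\scalarcurvature-\lambda)g$ and using that the divergence of $\varphi\,g$ equals $d\varphi$, one finds $\textup{div}(\hessian{f})=d\scalarcurvature$. On the other hand, the symmetry of the Hessian together with the commutation identity mentioned above gives
\[
  \textup{div}(\hessian{f})
  =
  d(\laplacian{f})+\riccitensor[\gradientfield{f}]
  =
  n\,d\scalarcurvature+\riccitensor[\gradientfield{f}].
\]
Comparing these two expressions immediately yields the intermediate identity $\riccitensor[\gradientfield{f}]=-(n-1)d\scalarcurvature$.

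Next I would take the divergence of both sides of this intermediate identity. Using the contracted second Bianchi identity in the form $\textup{div}(\riccitensor)(\gradientfield{f})=\tfrac{1}{2}g(\gradientfield{\scalarcurvature},\gradientfield{f})$, and the soliton equation to evaluate the pointwise inner product of $\riccitensor$ with $\hessian{f}$ (which equals $\scalarcurvature(\scalarcurvature-\lambda)$), the divergence of the left-hand side expands as
\[
  \textup{div}\bigl(\riccitensor[\gradientfield{f}]\bigr)
  =
  \tfrac{1}{2}g(\gradientfield{\scalarcurvature},\gradientfield{f})
  +
  \scalarcurvature(\scalarcurvature-\lambda),
\]
while the divergence of the right-hand side is simply $-(n-1)\laplacian{\scalarcurvature}$. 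Equating these two expressions and multiplying the result by $2$ then yields exactly the stated identity.

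The computation is entirely routine and no genuine obstacle is expected. The only care needed lies in tracking the sign conventions in the commutation identity and in the contracted second Bianchi identity; once these are pinned down, everything else is algebraic manipulation of the soliton equation and its trace.
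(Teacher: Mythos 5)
Your derivation is correct: tracing the soliton equation, applying the commutation identity to obtain \(\riccitensor[\gradientfield{f}]=-(n-1)\,d\scalarcurvature\), and then taking the divergence with the contracted second Bianchi identity and \(\innerproduct{\riccitensor,\hessian{f}}=\scalarcurvature(\scalarcurvature-\lambda)\) yields exactly the stated equation. The paper itself quotes this lemma from the cited references without proof, and your computation is precisely the standard argument given there, so there is nothing to add.
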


As a consequence of Lemma~\ref{tha}, we shall prove the following result on the
sign of the scalar curvature on gradient Yamabe solitons.

\begin{lemma}\label{thb}
  Let \((M^{n},g,f,\lambda)\) be a complete, shrinking or steady, gradient
  Yamabe soliton isometrically immersed into a space form
  \(\ambientspace{c}{n+1}\). Assume that
  \(\sup\left\{\norm{A_{x}}:x\in{M^{n}}\right\}\) is finite whenever \(n>2\)
  (if \(n=2\) this requirement is not necessary). Then, the scalar curvature of
  \((M^{n},g)\) satisfies
  \[
    0
    \leqslant
    \inf_{M^{n}}\scalarcurvature
    \leqslant
    \lambda.
  \]
  Furthermore, if the above infimum is attained at some point
  \(x_{0}\in{M^{n}}\) and one verifies that either \(\scalarcurvature(x_{0})=0\)
  or \(\scalarcurvature(x_{0})=\lambda\), then \(\scalarcurvature\) must be
  constant on \(M^{n}\).
\end{lemma}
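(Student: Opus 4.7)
My plan is to combine the PDE for $\scalarcurvature$ from Lemma~\ref{Lemma1} with the weak Omori--Yau principle of Lemma~\ref{tha}, and then invoke the strong maximum principle for the constancy statement. Dividing the identity of Lemma~\ref{Lemma1} by $2(n-1)$ and rearranging in terms of the drifted Laplacian of weight $f/[2(n-1)]$ yields the working equation
\[
  \driftedlaplacian{\frac{f}{2(n-1)}}{\scalarcurvature}
  =
  -\frac{\scalarcurvature(\scalarcurvature-\lambda)}{n-1},
\]
with the sign convention made consistent with the proof of Lemma~\ref{tha}.

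To establish $0\leqslant\inf_{M^n}\scalarcurvature\leqslant\lambda$, I first check that $\scalarcurvature$ is bounded from below. When $n>2$, this is immediate from the Gauss formula $\scalarcurvature=n(n-1)c+n^2\meancurvature^2-\norm{\shapeoperator}^2$ and the assumption $\sup_M\norm{\shapeoperator}<\infty$ (together with $\sqrt{n}\abs{\meancurvature}\leqslant\norm{\shapeoperator}$). For $n=2$, where no bound on $\norm{\shapeoperator}$ is available, the lower bound is instead drawn from known results on the scalar curvature of complete noncompact shrinking or steady gradient Yamabe solitons (cf.~\cite{ma2011properties,daskalopoulos2013classification}). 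Granted the lower bound, I apply Lemma~\ref{tha} to $u=-\scalarcurvature$ to obtain $\{x_k\}\subset M^n$ with $\scalarcurvature(x_k)\to\inf_M\scalarcurvature$ and $\driftedlaplacian{\frac{f}{2(n-1)}}{\scalarcurvature}(x_k)\geqslant-1/k$. Substituting the working equation and passing to the limit yields $\left(\inf_M\scalarcurvature\right)\left(\inf_M\scalarcurvature-\lambda\right)\leqslant 0$, which together with $\lambda\geqslant 0$ forces $0\leqslant\inf_M\scalarcurvature\leqslant\lambda$.

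For the constancy statement, assume the infimum is attained at $x_0\in M^n$ with $\scalarcurvature(x_0)\in\{0,\lambda\}$. In each subcase I recast the working equation so that a strong maximum principle applies on a neighborhood of $x_0$, and then extend globally by connectedness. If $\scalarcurvature(x_0)=\lambda$, setting $u=\scalarcurvature-\lambda\geqslant 0$ turns the working equation into $\driftedlaplacian{\frac{f}{2(n-1)}}{u}=-(u+\lambda)u/(n-1)\leqslant 0$, so $u$ is a supersolution of the pure drifted Laplacian attaining its interior minimum $0$ at $x_0$, and the strong maximum principle forces $u\equiv 0$. If $\scalarcurvature(x_0)=0$, I rewrite the working equation as the linear elliptic equation $L\scalarcurvature=0$ with zeroth-order coefficient $c(x)=(\scalarcurvature(x)-\lambda)/(n-1)$; since $c(x_0)=-\lambda/(n-1)\leqslant 0$, continuity gives $c\leqslant 0$ on a neighborhood of $x_0$, and the classical strong maximum principle of Gilbarg--Trudinger applied to $-\scalarcurvature\leqslant 0$ (which attains the interior maximum $0$ at $x_0$) yields $\scalarcurvature\equiv 0$ on that neighborhood. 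The set $\{\scalarcurvature=\inf_M\scalarcurvature\}$ is closed by continuity and open by iterating the local argument, so connectedness of $M^n$ forces $\scalarcurvature$ to be constant.

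The hardest step I anticipate is the $n=2$ case of the first part: without any bound on $\norm{\shapeoperator}$, the lower bound on $\scalarcurvature$ is not visible from the Gauss formula and must be imported from external soliton theory in order to legitimize applying Lemma~\ref{tha} to $-\scalarcurvature$. A secondary technical concern is keeping the sign conventions of the drifted Laplacian consistent so that Lemma~\ref{Lemma1} rearranges into exactly the operator for which Lemma~\ref{tha} guarantees Omori--Yau, and verifying, in the second part, that the linearized coefficient has the sign required by the strong maximum principle on the neighborhoods under consideration.
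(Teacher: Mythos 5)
Your proposal follows the same skeleton as the paper's proof: rewrite Lemma~\ref{Lemma1} as an elliptic equation for the drifted Laplacian \(\Delta_{f/[2(n-1)]}\), extract a minimizing sequence for \(\scalarcurvature\) via Lemma~\ref{tha}, pass to the limit to get \(\inf_{M^{n}}\scalarcurvature\,(\inf_{M^{n}}\scalarcurvature-\lambda)\leqslant 0\), and then run a strong maximum principle plus open--closed argument for the rigidity statement. The one genuine divergence is how you justify \(\sup_{M^{n}}(-\scalarcurvature)<\infty\) before invoking Lemma~\ref{tha}: you prove it geometrically from the Gauss equation when \(n>2\) and import it from external soliton theory when \(n=2\). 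The paper instead gets it for free, uniformly in \(n\), from Theorem 3.2 of Albanese--Al\'ias--Rigoli: once the weak maximum principle holds for \(\Delta_{w}\), any \(C^{2}\) function satisfying \(\Delta_{w}u\geqslant\varphi(u)\) with \(\varphi(t)=(\lambda t+t^{2})/(n-1)\) (positive and superlinear for large \(t\)) automatically has \(u^{\ast}<\infty\) together with \(\varphi(u^{\ast})\leqslant 0\). This keeps the argument self-contained; your route works but, for surfaces, leans on an outside nonnegativity theorem whose exact provenance you would need to pin down. There is also a small slip in your rigidity argument: in the subcase \(\scalarcurvature(x_{0})=0\) you claim that \(c(x_{0})=-\lambda/(n-1)\leqslant 0\) plus continuity gives \(c\leqslant 0\) on a neighborhood; continuity only propagates strict inequalities, so this works when \(\lambda>0\), while for \(\lambda=0\) one actually has \(c=\scalarcurvature/(n-1)\geqslant 0\) nearby. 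The lapse is harmless because for \(\lambda=0\) the two subcases coincide and your supersolution argument for \(u=\scalarcurvature-\lambda\) covers it (alternatively, as in the paper, keep the constant coefficient \(-\lambda/(n-1)\) on the left and the term \(u^{2}/(n-1)\geqslant 0\) on the right), but as written that step is not correct.
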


\begin{proof}[Proof of Theorem~\ref{thb}]
  Take both \(-u=\scalarcurvature\) and \(2(n-1)w=f\). Then, we get from
  Lemma~\ref{Lemma1} that
  \begin{equation*}
    \driftedlaplacian{w}{u}=\frac{\lambda{u}}{n-1}+\frac{u^{2}}{n-1}.
  \end{equation*}
  By applying Theorem 3.2 of~\cite{albanese2013general} with
  \[
    F(t)
    =
    t^{2}
    \quad\textup{and}\quad
    \varphi(u,\norm{\gradientfield{u}})
    =
    \frac{\lambda{u}}{n-1}+\frac{u^{2}}{n-1},
  \]
  we get that \(u^{\ast}=\sup\left\{u(x):x\in{M^{n}}\right\}<\infty\). Also, by
  Lemma~\ref{tha}, there exists a sequence
  \(\{x_{k}\}_{k\in\naturals}\subset{M^{n}}\) such that
  \[
    \driftedlaplacian{w}{u}(x_{k})
    <
    \frac{1}{k}
    \quad\textup{and}\quad
    u(x_{k})
    >
    u^{\ast}
    -
    \frac{1}{k}
    \quad(\forall{k\in{\naturals}}).
  \]
  Thus, we get that
  \[
    \frac{\lambda{u}^{\ast}}{n-1}
    +
    \frac{(u^{\ast})^{2}}{n-1}
    \leqslant{0},
  \]
  and because \(u=-\scalarcurvature\) we have that
  \[
    u^{\ast}
    =
    -\inf_{M^{n}}\scalarcurvature
    =
    -\scalarcurvature_{\ast},
  \]
  with \(0\leqslant{\scalarcurvature_{\ast}}\leqslant\lambda\). Therefore,
  \(\scalarcurvature\geqslant{0}\) on \(M^{n}\). Now, suppose that we have
  \(\scalarcurvature(x_{0})=-u(x_{0})=0\) for some \(x_{0}\in{M^{n}}\) and let
  \[
    \Omega_{0}=\{x\in{M^{n}}:u(x)=0\}=u^{-1}(0).
  \]
  Then, \(\Omega_{0}\) is a nonempty and closed subset of \(M^{n}\).
  Now, given any \(y\in{\Omega_{0}}\), since
  \begin{equation}\label{abc}
    \driftedlaplacian{w}{u}
    -
    \frac{\lambda{u}}{n-1}
    =
    \frac{u^{2}}{n-1}
    \geqslant{0}.
  \end{equation}
  we get from the maximum principle (see~\cite{gilbarg2015elliptic} p. 35) that
  \(u(x)=0\) in some neighborhood of \(y\), implying that \(\Omega_{0}\) is
  open. Since \(M^{n}\) is connected, this gives \(\Omega_{0}=M^{n}\), meaning
  that \(\scalarcurvature=0\) on \(M^{n}\). If
  \(\scalarcurvature(x_{0})=\lambda\) for some \(x_{0}\in{M^{n}}\), then one can
  reason in a similar way to show that we must have \(\scalarcurvature=\lambda\)
  on \(M^{n}\).
\end{proof}

\section{Classification of gradient Yamabe solitons hypersurfaces}\label{sec:classification}

The study of constant mean curvature hypersurfaces in space forms is one of the
oldest subjects in differential geometry. A well known result of Klotz and
Osserman~\cite{klotz1966complete} states that the standard spheres and the
circular cylinders are the only complete surfaces in the Euclidean space
\(\euclideanspace{3}\) with nonzero constant mean curvature whose Gaussian
curvature does not change sign. Hoffman~\cite{hoffman1973surfaces} and
Tribuzy~\cite{tribuzy1978hopf} gave an extension of that result to the case of
surfaces with constant mean curvature in the Euclidean 3-sphere
\(\roundsphere{3}\) and in the hyperbolic space \(\hyperbolicspace{3}\),
respectively. Summarizing the results of those authors in a single statement,
we obtain the following result.

\begin{proposition}[\cite{alias2010scalar,alias2012estimate}]\label{prop1}
  Let \(M^{2}\) be a complete surface isometrically immersed into a
  \(3\)-dimensional space form \(\ambientspace{c}{3}\) with constant mean
  curvature \(\meancurvature\). If its Gaussian curvature \(K\) does not change
  sign, then \(M^{2}\) is either a totally umbilical surface or else \(K=0\) and
  also:
  \begin{enumerate}[a)]
    \item
      \(M^{2}\) is the cylinder
      \(\euclideanspace{1}\times\roundsphere{1}(r)\subset\euclideanspace{3}\)
      with \(r>0\) if \(c=0\).
    \item
      \(M^{2}\) is the flat torus
      \(\roundsphere{1}(\sqrt{1-r^{2}})\times\roundsphere{1}(r)\subset\roundsphere{3}\)
      with \(0<r<1\) if \(c=1\).
    \item
      \(M^{2}\) is the hyperbolic cylinder
      \(\hyperbolicspace{1}(-\sqrt{1+r^{2}})\times\roundsphere{1}(r)\subset\hyperbolicspace{3}\)
      with \(r>0\) if \(c=-1\).
  \end{enumerate}
\end{proposition}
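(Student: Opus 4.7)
My plan is to exploit the holomorphicity of the Hopf differential for constant mean curvature surfaces in space forms, together with classical rigidity theorems for surfaces of controlled Gaussian curvature. I would endow $M^{2}$ with the complex structure induced by its conformal class. Since $\meancurvature$ is constant and the ambient space has constant sectional curvature, the Codazzi equation forces the Hopf differential $\phi$ (the $(2,0)$-part of the complex-bilinear extension of the second fundamental form) to be a holomorphic quadratic differential on $M^{2}$. At every non-umbilical point, $|\phi|^{2}$ is a positive multiple of $\norm{\tracelessshapeoperator}^{2}$, where $\tracelessshapeoperator$ is the traceless shape operator.

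The Gauss equation in dimension two, combined with~\eqref{eq:scalar-curvature} and the identity $\scalarcurvature=2K$, yields the pointwise algebraic relation
\[
\norm{\tracelessshapeoperator}^{2}
=
2\bigl(c+\meancurvature^{2}-K\bigr),
\]
so the one-sided bound on $K$ transfers to a one-sided bound on $\norm{\tracelessshapeoperator}^{2}$ (an upper bound when $K\geqslant 0$, a lower bound when $K\leqslant 0$). The argument then rests on a dichotomy: either $\phi\equiv 0$, in which case every point is umbilical and $M^{2}$ is a totally umbilical surface of $\ambientspace{c}{3}$, or else the zeros of $\phi$ are isolated and one must prove that $K\equiv 0$.

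In the second alternative I would combine a Simons-type identity for $\norm{\tracelessshapeoperator}^{2}$ valid on a CMC surface with the weak Omori-Yau maximum principle, whose hypotheses are satisfied thanks to completeness and the one-sided curvature bound supplied above. Testing the principle against the subharmonic quantity $\norm{\tracelessshapeoperator}^{2}$ (equivalently, against $|\phi|^{2}$ read on the Riemann surface) produces a pointwise inequality at the supremum that is compatible only with $K\equiv 0$ off the umbilical locus, hence with $K\equiv 0$ on all of $M^{2}$ by continuity and the discreteness of umbilical points. Once $K\equiv 0$ and $\meancurvature$ are simultaneously constant, the Hartman-Nirenberg-type classification of complete flat hypersurfaces in each of the three space forms, carried out by Klotz-Osserman, Hoffman, and Tribuzy for $c=0,1,-1$ respectively, delivers exactly the three models~(a), (b), and (c).

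The main obstacle I anticipate is the Omori-Yau step in the case $K\leqslant 0$: there the ambient curvature bound needed to invoke the principle cannot be read off directly from $K$, but must instead be extracted from the lower bound on $\norm{\tracelessshapeoperator}^{2}$ together with the Gauss equation. Verifying that the resulting estimates fit the scheme of~\cite{albanese2013general}, and that the inequality at the asymptotic maximum truly collapses to $K\equiv 0$ rather than to a weaker conclusion, is the delicate part of the argument.
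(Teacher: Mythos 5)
First, a point of order: the paper does not prove this proposition at all --- it is quoted from the literature (Klotz--Osserman, Hoffman and Tribuzy, as packaged in~\cite{alias2010scalar,alias2012estimate}) --- so there is no in-paper argument to match yours against. Judged on its own terms, your sketch correctly identifies the classical skeleton: holomorphicity of the Hopf differential for constant \(\meancurvature\), the dichotomy between \(\phi\equiv 0\) (totally umbilical) and isolated zeros, and the identity \(\norm{\tracelessshapeoperator}^{2}=2(c+\meancurvature^{2}-K)\). The gap is in the analytic core, where you must upgrade ``\(K\) does not change sign'' to ``\(K\equiv 0\)''.

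The weak Omori--Yau principle is the wrong tool for that step, for two distinct reasons. When \(K\geqslant 0\) the principle does apply (then \(\norm{\tracelessshapeoperator}^{2}\) is bounded above and \(\riccitensor=Kg\geqslant 0\)), but it only produces a sequence \(x_{k}\) with \(u(x_{k})\to\sup u\) and \(\laplacian{u}(x_{k})<1/k\); fed into the relevant Simons-type identity \(\laplacian{\log\norm{\tracelessshapeoperator}^{2}}=4K\) (valid off the umbilic set), this yields only \(K(x_{k})\to 0\), i.e.\ \(\inf_{M^{2}}K=0\), which is strictly weaker than \(K\equiv 0\). The classical argument closes this with parabolicity: a complete noncompact surface with \(K\geqslant 0\) is parabolic (Blanc--Fiala--Huber), so the bounded-above subharmonic function \(\log\norm{\tracelessshapeoperator}^{2}\) is constant and hence \(K\equiv 0\); the compact case is Gauss--Bonnet plus Hopf. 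When \(K\leqslant 0\), the situation is worse: \(\norm{\tracelessshapeoperator}^{2}\) is bounded \emph{below} but possibly unbounded above, so \(\norm{\shapeoperator}\) and with it any Ricci lower bound are uncontrolled, and none of the sufficient conditions of~\cite{albanese2013general} is available. You flag this yourself as ``the delicate part,'' but it is not a verification to be carried out --- it is where the method fails. The classical route there is entirely different: since \(|\phi|\) is bounded away from zero, the flat metric \(\abs{\phi}\,\abs{dz}^{2}\) is complete, so the universal cover is conformally \(\complexfield\), and a Liouville-type argument applied to the bounded superharmonic conformal factor forces \(K\equiv 0\). (Compare Lemma~\ref{thb} of the paper, where the sequence argument must likewise be supplemented by a strong maximum principle at an \emph{attained} extremum; attainment is exactly what Omori--Yau does not give you.) Your final step, classifying complete flat CMC surfaces in the three space forms, is fine once \(K\equiv 0\) is in hand.
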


As a consequence of Lemma~\ref{thb} and Proposition~\ref{prop1}, we deduce the
following classification of gradient Yamabe soliton surfaces, either shrinking
or steady, isometrically immersed into \(3\)-dimensional space forms.

\begin{theorem}\label{theoremnew1}
  Let \((M^{2},g,f,\lambda)\) be a complete \(2\)-dimensional gradient Yamabe
  soliton, shrinking or steady, isometrically immersed with constant mean
  curvature into a \(3\)-dimensional space form \(\ambientspace{c}{3}\). Then
  \(M^{2}\) is either totally umbilical, or
  \begin{enumerate}[a)]
    \item
      \(M^{2}\) is the cylinder
      \(\euclideanspace{1}\times\roundsphere{1}(r)\subset\euclideanspace{3}\)
      with \(r>0\).
    \item
      \(M^{2}\) is the flat torus
      \(\roundsphere{1}(\sqrt{1-r^{2}})\times\roundsphere{1}(r)\subset\roundsphere{3}\)
      where \(0<r<1\). In this case, a trivial soliton since \(M^{2}\) is
      compact.
    \item
      \(M^{2}\) is the hyperbolic cylinder
      \(\hyperbolicspace{1}(-\sqrt{1+r^{2}})\times\roundsphere{1}(r)\subset\hyperbolicspace{3}\)
      with \(r>0\).
  \end{enumerate}
\end{theorem}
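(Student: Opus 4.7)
The plan is to combine Lemma~\ref{thb} with Proposition~\ref{prop1}, supplemented by a short observation in the compact torus case. Since \(n=2\), Lemma~\ref{thb} applies without any assumption on \(\sup_{M^{2}}\norm{\shapeoperator}\) and yields
\[
  0\leqslant\inf_{M^{2}}\scalarcurvature\leqslant\lambda.
\]
In particular \(\scalarcurvature\geqslant 0\) everywhere on \(M^{2}\). For a surface one has \(\scalarcurvature=2K\), with \(K\) the Gaussian curvature, so \(K\geqslant 0\) throughout \(M^{2}\); in particular \(K\) does not change sign.

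With this in hand, Proposition~\ref{prop1} applies directly to the constant mean curvature surface \(M^{2}\), forcing \(M^{2}\) to be totally umbilical or one of the three flat model surfaces listed in items a), b), c). This already gives the trichotomy asserted in the theorem.

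It only remains to justify the parenthetical claim in case b), namely that the flat torus \(\roundsphere{1}(\sqrt{1-r^{2}})\times\roundsphere{1}(r)\subset\roundsphere{3}\) carries only trivial shrinking or steady soliton structures. Here \(M^{2}\) is compact and flat, so \(\scalarcurvature\equiv 0\), and the soliton equation \(\hessian<g>{f}=(\scalarcurvature-\lambda)g\) reduces to \(\hessian<g>{f}=-\lambda g\). Taking traces gives \(\laplacian{f}=-2\lambda\); integrating over compact \(M^{2}\) forces \(\lambda=0\), so the soliton is steady. Then \(\laplacian{f}=0\) on compact \(M^{2}\) implies that \(f\) is constant, hence \(\gradientfield<g>{f}=0\) and the soliton is trivial.

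Conceptually, the heart of the argument is Lemma~\ref{thb}: once nonnegativity of the Gaussian curvature has been extracted from the weak Omori-Yau maximum principle, the classification reduces to the well established rigidity theorem of Klotz--Osserman/Hoffman/Tribuzy. I therefore do not anticipate any serious obstacle beyond the elementary bookkeeping needed to dispatch the compact torus case.
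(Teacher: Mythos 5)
Your proposal is correct and follows essentially the same route as the paper: Lemma~\ref{thb} gives \(\scalarcurvature=2K\geqslant 0\), so \(K\) does not change sign and Proposition~\ref{prop1} yields the trichotomy. Your argument is in fact a little more streamlined (the paper runs a slightly roundabout case analysis on \(c+\meancurvature^{2}\) before invoking Proposition~\ref{prop1}), and your integration argument for triviality in the compact torus case is a valid justification of what the paper only asserts parenthetically.
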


\begin{proof}[Proof of Theorem~\ref{theoremnew1}]
  It follows from the Gauss equation in conjunction with Lemma~\ref{thb} that
  \[
    0
    \leqslant
    \inf_{M^{2}}K
    \leqslant
    K
    =
    c+k_{1}k_{2}
    \leqslant{c+\meancurvature^{2}},
  \]
  with the equality
  \(\inf\left\{K(x):x\in{M^{2}}\right\}=c+\meancurvature^{2}\) taking place if
  and only if \(M^{2}\) is totally umbilical. If \(c+\meancurvature^{2}=0\),
  then \(c+\meancurvature^{2}=\inf\left\{K(x):x\in{M^{2}}\right\}=0\) and
  \(M^{2}\) is totally umbilical. If \(c+\meancurvature^{2}>0\), then
  \(\inf\left\{K(x):x\in{M^{2}}\right\}\leqslant{0}\) because otherwise we
  would get \(K\geqslant{\inf\left\{K(x):x\in{M^{2}}\right\}}>0\), thus
  contradicting Proposition~\ref{prop1}. Now,
  \(\inf\left\{K(x):x\in{M^{2}}\right\}=0\) and so \(K\) does not change its
  sign on \(M^{2}\) and the proof follows once again from
  Proposition~\ref{prop1}.
\end{proof}

\begin{remark}
  From Theorem 1 of~\cite{pigola2011remarks} and Theorem~\ref{theoremnew1} above
  we conclude that any complete shrinking \(2\)-dimensional gradient Yamabe
  soliton isometrically immersed with constant mean curvature into a
  \(3\)-dimensional space form \(\ambientspace{c}{3}\) is either trivial or
  totally umbilical. Also, because gradient Ricci solitons and gradient Yamabe
  solitons are the same object in dimension \(n=2\), Theorem~\ref{theoremnew1}
  is a classification of complete gradient Ricci solitons, shrinking or steady,
  isometrically immersed with constant mean curvature into \(3\)-dimensional
  space forms.
\end{remark}

Following is an application of Theorem~\ref{theoremnew1} to the case of minimal
surfaces, that is, those with mean curvature \(\meancurvature=0\).

\begin{corollary}\label{coro1}
  Let \((M^{2},g,f,\lambda)\) be a complete \(2\)-dimensional gradient Yamabe
  soliton, shrinking or steady, isometrically immersed as a minimal submanifold
  of a \(3\)-dimensional space form \(\ambientspace{c}{3}\). Then, we have the
  following:
  \begin{enumerate}[a)]
    \item
      \(M^{2}\) is a plane in \(\euclideanspace{3}\) if \(c=0\);
    \item
      \(M^{2}\) is either the Clifford torus
      \(\roundsphere{1}(\sqrt{1/2})\times\roundsphere{1}(\sqrt{1/2})\) or a
      totally geodesic sphere of \(\roundsphere{3}\) if \(c=1\);
    \item
      There does not exist such an \(M^{2}\) in \(\hyperbolicspace{3}\) if
      \(c=-1\).
  \end{enumerate}
\end{corollary}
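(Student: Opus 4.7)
The plan is to derive Corollary~\ref{coro1} directly from Theorem~\ref{theoremnew1} by specializing to $\meancurvature = 0$, while using the scalar curvature sign provided by Lemma~\ref{thb} to eliminate one residual possibility. Theorem~\ref{theoremnew1} applies because minimality is a special case of constant mean curvature, so $M^{2}$ must be either totally umbilical or one of the three product examples listed there. I would then examine each possibility and retain only those compatible with the minimality assumption and with $\inf_{M^{2}}\scalarcurvature\geqslant 0$.

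In the totally umbilical case, $\shapeoperator = \meancurvature\cdot\identityoperator = 0$, so $M^{2}$ is totally geodesic. The complete totally geodesic surfaces of $\ambientspace{c}{3}$ are planes in $\euclideanspace{3}$, great $2$-spheres in $\roundsphere{3}$, and totally geodesic copies of $\hyperbolicspace{2}$ in $\hyperbolicspace{3}$. Using~\eqref{eq:scalar-curvature} their scalar curvatures are $0$, $2$, and $-2$, respectively. Lemma~\ref{thb} forces $\scalarcurvature\geqslant 0$, which eliminates the hyperbolic plane; this handles the plane of (a), the totally geodesic sphere of (b), and already starts (c).

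For the product cases I would quote the mean curvatures recorded in the examples of Section~\ref{sec:preliminaries}. The Euclidean cylinder $\euclideanspace{1}\times\roundsphere{1}(r)\subset\euclideanspace{3}$ and the hyperbolic cylinder $\hyperbolicspace{1}(-\sqrt{1+r^{2}})\times\roundsphere{1}(r)\subset\hyperbolicspace{3}$ both have strictly positive mean curvature for every $r>0$, so neither can be minimal, completing cases (a) and (c). Only the flat torus $\roundsphere{1}(\sqrt{1-r^{2}})\times\roundsphere{1}(r)\subset\roundsphere{3}$ admits $\meancurvature=0$, and this occurs precisely when $r=\sqrt{1/2}$, producing the Clifford torus and completing (b). The single nontrivial step is the use of Lemma~\ref{thb} to rule out the totally geodesic $\hyperbolicspace{2}\subset\hyperbolicspace{3}$; everything else reduces to inspection of the mean curvature formulas already available from the examples.
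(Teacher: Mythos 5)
Your argument is correct and is exactly the route the paper intends: the corollary is stated as an application of Theorem~\ref{theoremnew1} (with no written proof), and your specialization to $\meancurvature=0$ together with the use of Lemma~\ref{thb} to exclude the totally geodesic $\hyperbolicspace{2}\subset\hyperbolicspace{3}$ is the natural way to carry that application out. The inspection of the mean curvatures of the three product surfaces, isolating $r=\sqrt{1/2}$ for the Clifford torus, matches the formulas already recorded in the paper's examples.
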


The next result extend Theorem~\ref{theoremnew1} for gradient Yamabe solitons of
dimension \(n>2\).

\begin{theorem}\label{theoremnew2}
  Let \((\submanifold,g,f,\lambda)\), \(n>2\), be a complete gradient Yamabe
  soliton, either shrinking or steady, isometrically immersed with constant
  mean curvature into a space form \(\ambientspace{c}{n+1}\). Assume that
  \(\sup\left\{\norm{\shapeoperator_{x}}:x\in{\submanifold}\right\}\) is
  finite. Then, either we have that \(\submanifold\) is totally umbilical or
  else
  \begin{equation}\label{ast2}
    0
    \leqslant
    \inf_{\submanifold}\scalarcurvature
    \leqslant
    \min\{\lambda,\mu(\meancurvature,c)\},
  \end{equation}
  where
  \[
    \mu(\meancurvature,c)
    =
    \frac{n(n-2)}{2(n-1)}
    \left(
    2(n-1)c
    +
    n\meancurvature^{2}
    +
    \abs{\meancurvature}
    \sqrt{n^{2}\meancurvature^{2}+4(n-1)c}
    \right).
  \]
  Moreover,
  \begin{enumerate}
    \item
      Assume that the infimum
      \(\inf\left\{\scalarcurvature(x):x\in{\submanifold}\right\}\) is attained
      at some point \(x_{0}\in{\submanifold}\). Then,
      \(\scalarcurvature(x_{0})=\tau\) if and only if \(\scalarcurvature\) is
      equal to \(\tau\) everywhere on \(\submanifold\), where \(\tau=0\) or
      \(\tau=\lambda\);
    \item
      There exists a \(x_{0}\in{\submanifold}\) such that
      \[
        \scalarcurvature(x_{0})
        =
        \inf_{\submanifold}\scalarcurvature
        =
        \mu(\meancurvature,c),
      \]
      if, and only if, \(\submanifold\) is one of the following manifolds:
      \begin{enumerate}
        \item
          A circular cylinder
          \(\euclideanspace{1}\times\roundsphere{n-1}(r)\subset\euclideanspace{n+1}\),
          where \(r>0\).
        \item
          A minimal Clifford torus
          \[
            \roundsphere{k}(\sqrt{k/n})\times\roundsphere{n-k}(\sqrt{(n-k)/n})
            \subset
            \roundsphere{n+1},
          \]
          where \(k=1,\ldots,n-1\), or a constant mean curvature torus
          \[
            \roundsphere{1}(\sqrt{1-r^{2}})\times\roundsphere{n-1}(r)
            \subset
            \roundsphere{n+1},
          \]
          where \(0<r<\sqrt{(n-1)/n}\).
      \end{enumerate}
  \end{enumerate}
\end{theorem}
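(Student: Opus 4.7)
Since $\meancurvature$ is constant, formula~\eqref{eq:scalar-curvature} rewrites as $\scalarcurvature = n(n-1)(c+\meancurvature^{2}) - \norm{\tracelessshapeoperator}^{2}$, so the inequality~\eqref{ast2} is equivalent to an upper bound on $\sup_{\submanifold}\norm{\tracelessshapeoperator}^{2}$. The hypothesis $\sup \norm{\shapeoperator} < \infty$ plays two roles: it makes $\sup \norm{\tracelessshapeoperator}^{2}$ finite, and via the Gauss equation it yields a uniform lower bound on the Ricci tensor of $(\submanifold, g)$, so that Yau's classical weak Omori--Yau principle for the ordinary Laplacian is available on $\submanifold$. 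My plan is to couple this with the Simons formula for CMC hypersurfaces of space forms and Okumura's inequality $\abs{\trace(\tracelessshapeoperator^{3})} \leq \frac{n-2}{\sqrt{n(n-1)}}\norm{\tracelessshapeoperator}^{3}$ to obtain the differential inequality
\[
  \tfrac{1}{2}\laplacian{\norm{\tracelessshapeoperator}^{2}} \geq -\norm{\tracelessshapeoperator}^{2}\,P_{\meancurvature, c}(\norm{\tracelessshapeoperator}),
\]
where $P_{\meancurvature, c}(x) = x^{2} + \frac{n(n-2)\abs{\meancurvature}}{\sqrt{n(n-1)}}\,x - n(c+\meancurvature^{2})$. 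Applied to $u = \norm{\tracelessshapeoperator}^{2}$, the weak Omori--Yau principle produces a sequence along which $u(x_{k}) \to u^{\ast}$ and $\laplacian{u}(x_{k}) < 1/k$; passing to the limit gives $u^{\ast}\,P_{\meancurvature, c}(\sqrt{u^{\ast}}) \geq 0$, hence either $u^{\ast} = 0$ (so $\submanifold$ is totally umbilical) or $u^{\ast} \geq \alpha^{2}$, where $\alpha > 0$ is the positive root of $P_{\meancurvature, c}$. A routine algebraic identification gives $n(n-1)(c+\meancurvature^{2}) - \alpha^{2} = \mu(\meancurvature, c)$, and combining this with Lemma~\ref{thb} establishes~\eqref{ast2}.

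Part~(1) is an immediate consequence of the rigidity assertion of Lemma~\ref{thb}. For Part~(2), I would observe that $P_{\meancurvature, c} \leq 0$ on $\interval{0}{\alpha}$, so the differential inequality makes $\norm{\tracelessshapeoperator}^{2}$ subharmonic on $\submanifold$. Since its supremum $\alpha^{2}$ is attained at an interior point $x_{0}$, Hopf's strong maximum principle forces $\norm{\tracelessshapeoperator}^{2} \equiv \alpha^{2}$, and equality must then propagate back to both Simons' formula and Okumura's inequality: the first gives $\submanifoldconn \shapeoperator \equiv 0$, so $\submanifold$ is isoparametric, and the second gives, when $\meancurvature \neq 0$, exactly two distinct principal curvatures of multiplicities $1$ and $n-1$. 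The classical classification of such isoparametric hypersurfaces in space forms then identifies $\submanifold$ with either a circular cylinder in $\euclideanspace{n+1}$ or a torus $\roundsphere{1}(\sqrt{1-r^{2}}) \times \roundsphere{n-1}(r) \subset \roundsphere{n+1}$; the ambient case $c = -1$ is excluded because the only candidate $\hyperbolicspace{n-1} \times \roundsphere{1}$ from Example~\ref{example1} has $\scalarcurvature = (n-1)(2-n)/(1+r^{2}) < 0$ for $n > 2$, contradicting Lemma~\ref{thb}. The minimal case $\meancurvature = 0$ must be handled separately, as Okumura's inequality becomes vacuous: Simons' formula there reduces to $\norm{\shapeoperator}^{2} \equiv n$ with $c = 1$, and the Chern--do Carmo--Kobayashi theorem identifies $\submanifold$ as a minimal Clifford torus. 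The converse direction follows by direct verification using the principal curvatures computed in Section~\ref{sec:preliminaries}.

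The hardest step will be this rigidity analysis: one has to carefully track the interplay between the sign of $\meancurvature$, the sign of $\trace(\tracelessshapeoperator^{3})$ enforced by Okumura's equality, and the resulting eigenvalue multiplicity pattern; then combine this with the appropriate classical classification theorem for each combination of $c$ and the behaviour of $\meancurvature$; and finally invoke Example~\ref{example1} together with Lemma~\ref{thb} to rule out the hyperbolic ambient.
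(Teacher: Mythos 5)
Your overall route is the same as the paper's: the paper derives $0\leqslant\inf_{\submanifold}\scalarcurvature\leqslant\lambda$ and the bound $\norm{\tracelessshapeoperator}^{2}\leqslant n(n-1)(c+\meancurvature^{2})$ from Lemma~\ref{thb} and the Gauss equation, and then simply cites Corollary 4 of~\cite{alias2010scalar} for the dichotomy ``totally umbilical or $\inf_{\submanifold}\scalarcurvature\leqslant\mu(\meancurvature,c)$'' together with the equality characterization; what you do is re-derive that cited result via the Simons formula, Okumura's inequality and the weak Omori--Yau principle for the ordinary Laplacian (available because $\sup_{\submanifold}\norm{\shapeoperator}<\infty$ bounds $\riccitensor$ from below). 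Your derivation of~\eqref{ast2} and of part (1) is correct, including the identification $n(n-1)(c+\meancurvature^{2})-\alpha^{2}=\mu(\meancurvature,c)$; you should only add a word on the case $c+\meancurvature^{2}\leqslant 0$, where $P_{\meancurvature,c}$ has no positive root and umbilicity instead follows directly from $\norm{\tracelessshapeoperator}^{2}=n(n-1)(c+\meancurvature^{2})-\scalarcurvature\leqslant 0$, which is how the paper handles it.

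The genuine gap is in your exclusion of the hyperbolic ambient in part (2). Equality in Okumura's inequality forces principal curvatures of multiplicities $n-1$ and $1$, and in $\hyperbolicspace{n+1}$ this pattern is realized not only by $\hyperbolicspace{n-1}(-\sqrt{1+r^{2}})\times\roundsphere{1}(r)$ but also by the hyperbolic cylinder $\hyperbolicspace{1}(-\sqrt{1+r^{2}})\times\roundsphere{n-1}(r)$, whose principal curvatures are $r/\sqrt{1+r^{2}}$ with multiplicity $1$ and $\sqrt{1+r^{2}}/r$ with multiplicity $n-1$. That hypersurface has $\scalarcurvature=(n-1)(n-2)/r^{2}>0$, so Lemma~\ref{thb} does not rule it out, and a direct computation shows it attains the equality $\scalarcurvature=\mu(\meancurvature,-1)$ (for $n=3$, $r=1$ one gets $\meancurvature^{2}=25/18$ and $\scalarcurvature=2=\mu(\meancurvature,-1)$); as a trivial shrinking soliton with constant mean curvature and bounded second fundamental form it satisfies every hypothesis of the theorem. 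So your claim that ``the only candidate is $\hyperbolicspace{n-1}\times\roundsphere{1}$, which has negative scalar curvature'' does not close the case $c=-1$. Be aware that the paper does not close it either: its proof delegates the equality analysis to Corollary 4 of~\cite{alias2010scalar}, which concerns the ambients $\euclideanspace{n+1}$ and $\roundsphere{n+1}$ (the hyperbolic estimate is the subject of~\cite{alias2012estimate}, whose equality case includes precisely these hyperbolic cylinders), so the omission appears to lie in the statement of part (2) itself rather than being something your argument, as written, can repair.
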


\begin{proof}[Proof of Theorem \ref{theoremnew2}]
  The proof of Theorem~\ref{theoremnew2} is an application of Lemma~\ref{thb}
  and a result based on a Simons type formula, due to Alías and
  Martínez~\cite{alias2010scalar,alias2012estimate,nomizu1969formula}, for the
  Laplacian of \(\norm{\tracelessshapeoperator}^{2}\). Combining
  Lemma~\ref{thb} and the Gauss equation we conclude that
  \begin{equation*}
    n(n-1)(c+\meancurvature^{2})
    \geqslant
    \norm{\tracelessshapeoperator}^{2}
    \geqslant{0}.
  \end{equation*}
  If \(c+\meancurvature^{2}=0\), then
  \(\norm{\tracelessshapeoperator}^{2}\equiv0\) and \(\submanifold\) is totally
  umbilical. If \(c+\meancurvature^{2}>0\), then \(\submanifold\) is totally
  umbilical and either
  \[
    \inf_{\submanifold}\scalarcurvature
    =
    n(n-1)(c+\meancurvature^{2})
    \quad\text{or}\quad
    \inf_{\submanifold}\scalarcurvature
    \leqslant
    \mu(\meancurvature,c),
  \]
  by Corollary 4 of~\cite{alias2010scalar}. In the latter case, since
  \(0\leqslant\inf\left\{\scalarcurvature(x):x\in{\submanifold}\right\}\leqslant\lambda\),
  we get the estimate in~\eqref{ast2}.
\end{proof}

In particular, we get the following result for gradient Yamabe solitons
immersed as minimal hypersurfaces of the hyperbolic space
\(\hyperbolicspace{n+1}\). Notice that it extends item (ii) of Theorem 1
of~\cite{cunha2021immersions} as we don't impose any conditions on the gradient
field \(\gradientfield{f}\).

\begin{corollary}
  No shrinking or steady gradient Yamabe soliton \((\submanifold,g,f,\lambda)\)
  can be isometrically immersed as a minimal hypersurface of the hyperbolic
  space \(\hyperbolicspace{n+1}\) in such a way that the norm of its second
  fundamental form is bounded from above.
\end{corollary}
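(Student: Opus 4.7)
The plan is to proceed by contradiction. Suppose that a complete shrinking or steady gradient Yamabe soliton $(\submanifold,g,f,\lambda)$ admits an isometric minimal immersion into $\hyperbolicspace{n+1}$ with $\sup_{\submanifold}\norm{\shapeoperator}<\infty$. The hypotheses of Lemma~\ref{thb} are then met in either dimension range (the bound on $\norm{\shapeoperator}$ covers the regime $n>2$, while no such bound is required for $n=2$). Consequently Lemma~\ref{thb} yields
\begin{equation*}
  0 \leqslant \inf_{\submanifold}\scalarcurvature.
\end{equation*}

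On the other hand, specializing the scalar-curvature expression \eqref{eq:scalar-curvature} to $c=-1$ and $\meancurvature\equiv 0$ gives, pointwise on $\submanifold$,
\begin{equation*}
  \scalarcurvature
  \;=\;
  -n(n-1) - \norm{\shapeoperator}^{2}
  \;\leqslant\;
  -n(n-1)
  \;<\;
  0.
\end{equation*}
This directly contradicts the previous inequality, so no such immersion exists.

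The argument is essentially a one-line consequence of Lemma~\ref{thb}, so I do not anticipate a real obstacle; the only point worth checking is that the minimality hypothesis together with the assumed bound on $\norm{\shapeoperator}$ suffices to trigger Lemma~\ref{thb}, which it does since that lemma imposes no condition on $\meancurvature$. Conceptually the content is simply that in the hyperbolic ambient space $\hyperbolicspace{n+1}$ minimality forces the intrinsic scalar curvature to be strictly negative everywhere, which is incompatible with the non-negative lower bound on $\inf\scalarcurvature$ enforced by the shrinking-or-steady gradient Yamabe soliton structure.
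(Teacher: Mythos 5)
Your proof is correct. The paper presents this corollary as a consequence of Theorem~\ref{theoremnew2}: taking $\meancurvature=0$ and $c=-1$ there, one either lands in the totally umbilical (hence totally geodesic) case with $\scalarcurvature=-n(n-1)<0$, or in the estimate $0\leqslant\inf_{\submanifold}\scalarcurvature\leqslant\mu(0,-1)=-n(n-2)<0$, both contradictions (with the two-dimensional case handled separately via Corollary~\ref{coro1}). You instead bypass Theorem~\ref{theoremnew2} and the Simons-type estimate entirely: Lemma~\ref{thb} gives $\inf_{\submanifold}\scalarcurvature\geqslant 0$, while the Gauss equation~\eqref{eq:scalar-curvature} with $c=-1$ and $\meancurvature=0$ forces $\scalarcurvature=-n(n-1)-\norm{\shapeoperator}^{2}<0$ pointwise. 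This is a genuinely more elementary route that needs no case split on umbilicity, no invocation of the Alías--García-Martínez bound $\mu(\meancurvature,c)$, and treats $n=2$ and $n>2$ uniformly; what the paper's route buys is nothing extra here, since minimality already makes the scalar curvature manifestly negative. Your implicit assumption that the soliton is complete matches the paper's (unstated but intended) hypothesis, since Lemma~\ref{thb} requires it.
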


The next result states that the hyperplanes are the only shrinking or steady
gradient Yamabe solitons that can be isometrically immersed as minimal
submanifolds of flat Euclidean space \(\euclideanspace{n+1}\) as to have a
bound from above on the norm of its second fundamental form (see item (iii)
Theorem 1 of~\cite{cunha2021immersions}).

\begin{corollary}\label{coro2}
  The only shrinking or steady gradient Yamabe solitons
  \((\submanifold,g,f,\lambda)\) that can be isometrically immersed into
  Euclidean space \(\euclideanspace{n+1}\) as minimal hypersurfaces in such a
  way that the norm of its second fundamental form is bounded from above are
  the hyperplanes.
\end{corollary}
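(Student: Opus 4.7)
The plan is to reduce the statement to a direct application of the results already established, namely Corollary \ref{coro1} when $n=2$ and Theorem \ref{theoremnew2} when $n>2$. In both cases the key observation is that the minimality assumption $\meancurvature \equiv 0$ combined with $c = 0$ gives $c + \meancurvature^{2} = 0$, which collapses the scalar-curvature estimates of Lemma \ref{thb} in the sharpest possible way.

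For $n = 2$, the conclusion is already part (a) of Corollary \ref{coro1}, which identifies $M^{2}$ as a plane in $\euclideanspace{3}$; no additional argument is needed and the boundedness of the second fundamental form is automatic in this dimension since Lemma \ref{tha} does not require it.

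For $n > 2$, the assumption $\sup_{\submanifold}\norm{\shapeoperator} < \infty$ puts us in the scope of Theorem \ref{theoremnew2}. Its proof provides the inequality
\[
  n(n-1)\left(c + \meancurvature^{2}\right)
  \;\geqslant\;
  \norm{\tracelessshapeoperator}^{2}
  \;\geqslant\;
  0,
\]
and with $c = 0$ and $\meancurvature \equiv 0$ the left-hand side vanishes, forcing $\tracelessshapeoperator \equiv 0$. Hence $\submanifold$ is totally umbilical, and since $\shapeoperator = \meancurvature \cdot \identityoperator + \tracelessshapeoperator$ with $\meancurvature = 0$, we conclude $\shapeoperator \equiv 0$, so $\submanifold$ is totally geodesic. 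By the classical classification of complete totally geodesic hypersurfaces of flat Euclidean space, $\submanifold$ must be a hyperplane.

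The only potential subtlety, and what I would flag as the main conceptual check, is ruling out the alternative branch of Theorem \ref{theoremnew2}, where the umbilicity conclusion is replaced by the estimate $\inf_{\submanifold}\scalarcurvature \leqslant \mu(\meancurvature,c)$. A direct computation gives $\mu(0,0) = 0$, so combined with $\inf\scalarcurvature \geqslant 0$ from Lemma \ref{thb} this would only yield $\inf\scalarcurvature = 0$; however, one never actually enters that branch because the vanishing of $c + \meancurvature^{2}$ lands us in the totally umbilical case of Theorem \ref{theoremnew2} from the very start. Consequently no further analysis is required and the corollary follows.
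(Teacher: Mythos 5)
Your proposal is correct and follows exactly the route the paper intends for this corollary: with $c=0$ and $\meancurvature\equiv 0$ the inequality $n(n-1)(c+\meancurvature^{2})\geqslant\norm{\tracelessshapeoperator}^{2}\geqslant 0$ from the proof of Theorem~\ref{theoremnew2} (itself a consequence of Lemma~\ref{thb} and the Gauss equation) forces $\shapeoperator\equiv 0$, hence a hyperplane, while the $n=2$ case is Corollary~\ref{coro1}(a). No gaps; your side remark about the alternative branch of Theorem~\ref{theoremnew2} is also handled correctly.
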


\begin{remark}
  Combining item a) of Corollary~\ref{coro1} and Corollary~\ref{coro2} we show
  that Corollary 1.2 of Mastrolia, Rigoli and Rimoldi~\cite{mastrolia2013some}
  is valid also for gradient Yamabe solitons.
\end{remark}

Cao, Sun and Zhang proved in~\cite{cao2012structure} that nontrivial complete
gradient Yamabe solitons \((\submanifold,g,f,\lambda)\) are rigid in the sense
that either \(f\) has one critical point and \((\submanifold,g)\) is a warped
product manifold of the form
\[
  (\rinterval{0}{\infty},dr^{2})
  \times_{\norm{\gradientfield{f}}}
  (\roundsphere{n-1},g_{\roundsphere{n-1}}),
\]
or else \(f\) has no critical points, and \((\submanifold,g)\) is a warped
product of the form
\[
  (\reals,dr^{2})
  \times_{\norm{\gradientfield{f}}}
  (N^{n-1},\bar{g}),
\]
where \((N^{n-1},\bar{g})\) is a Riemannian manifold of constant scalar
curvature. This motivated us to also pursue a classification in terms of the
number of critical points of the potential function of a gradient Yamabe
soliton that immersed as hypersurface of a space form.

\begin{theorem}\label{theoremx}
  Let \((\submanifold,g,f,\lambda)\), \(n>3\), be a complete gradient Yamabe
  soliton isometrically immersed into a space form \(\ambientspace{c}{n+1}\).
  Assume that \(f\) has exactly one critical point \(x_{0}\in{\submanifold}\).
  Then, either \(\submanifold\) is totally umbilical or else, locally, around
  every nonumbilical point not equal to \(x_{0}\), if
  \(\scalarcurvature\neq{n(n-1)c}\), it's rotationally symmetric.
\end{theorem}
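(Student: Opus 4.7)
The plan is to combine the structural rigidity theorem of Cao, Sun and Zhang~\cite{cao2012structure} for nontrivial gradient Yamabe solitons, the classical Cartan-type characterization of conformally flat hypersurfaces of space forms, and the classification of rotation hypersurfaces by do Carmo and Dajczer~\cite{do1983rotation}. The trivial case \(\hessian{f}\equiv 0\) is incompatible with having exactly one critical point (a nonzero parallel gradient has no critical points, while a constant \(f\) has all points critical), so \(\submanifold\) must be nontrivial. Since \(f\) has one critical point \(x_{0}\), I would invoke Theorem~1 of~\cite{cao2012structure} to obtain the global warped product decomposition
\[
  \left(\submanifold,g\right)
  =
  \bigl(\rinterval{0}{\infty},dr^{2}\bigr)
  \times_{\norm{\gradientfield{f}}}
  \bigl(\roundsphere{n-1},g_{\roundsphere{n-1}}\bigr),
\]
with \(x_{0}\) corresponding to \(r=0\) and the warping function depending only on \(r\). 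Away from \(x_{0}\), the metric \(g\) is a warped product over a one-dimensional base whose fiber has constant sectional curvature, which is a well-known sufficient condition for local conformal flatness.

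Next, because \(n>3\), I would invoke the theorem of Cartan~\cite{cartan1917deformation}, refined by Nishikawa and Maeda~\cite{nishikawa1974conformally}: a hypersurface of dimension \(n\geqslant 4\) of a space form is locally conformally flat if and only if, at every point, its shape operator admits a principal curvature of multiplicity at least \(n-1\). Applied at a nonumbilical point \(y\neq{x_{0}}\), this yields that \(\shapeoperator\) has, on a neighborhood of \(y\), exactly two distinct principal curvatures \(\kappa\) and \(\mu\) of multiplicities \(n-1\) and \(1\) respectively. The hypothesis \(\scalarcurvature\neq{n(n-1)c}\) translates via~\eqref{eq:scalar-curvature} into \(n^{2}\meancurvature^{2}\neq{\norm{\shapeoperator}^{2}}\); a short computation with the two eigenvalues gives \(n^{2}\meancurvature^{2}-\norm{\shapeoperator}^{2}=(n-1)\kappa[(n-2)\kappa+2\mu]\), so this condition excludes both \(\kappa=0\) and the degenerate relation \(\mu=-(n-2)\kappa/2\), ensuring that the simple and the high-multiplicity eigendistributions are well-defined and smoothly varying on an open neighborhood of \(y\).

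Finally, I would apply the structural theorem of do Carmo and Dajczer~\cite{do1983rotation}: a hypersurface of a space form admitting, on an open set, a principal curvature of multiplicity \(n-1\geqslant{2}\) is, in a neighborhood of each of its points, congruent to a piece of a rotation hypersurface. Applied to a neighborhood of \(y\), this produces the desired local rotational symmetry.

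The main obstacle I anticipate is compatibility between the warped product structure coming from Cao--Sun--Zhang and the rotation structure arising from do Carmo--Dajczer: one needs to verify that the simple principal direction of \(\shapeoperator\) is colinear with \(\gradientfield{f}\), so that the two rotational structures share a common axis. This should follow by differentiating the soliton equation \(\hessian{f}=(\scalarcurvature-\lambda)g\) along arbitrary tangent directions, using the Codazzi equation to transfer derivatives onto \(\shapeoperator\), and exploiting that the principal curvatures are constant along the \(\roundsphere{n-1}\)-fibers of the warped product (which coincide with the level sets of \(f\)). The identification \(\ker(\shapeoperator-\mu\identityoperator)=\textup{span}\{\gradientfield{f}\}\) would then close the argument.
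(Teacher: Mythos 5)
Your proposal follows essentially the same route as the paper: local conformal flatness of the soliton, Cartan's two-eigenvalue characterization of conformally flat hypersurfaces for \(n>3\), the hypothesis \(\scalarcurvature\neq{n(n-1)c}\) to rule out the degenerate principal curvature \(\rho=0\), and Theorem 4.2 of do Carmo--Dajczer. The only cosmetic differences are that the paper cites Catino--Mantegazza--Mazzieri directly for conformal flatness rather than deriving it from the Cao--Sun--Zhang warped product structure, and that your closing worry about aligning the rotation axis with \(\gradientfield{f}\) is unnecessary, since the theorem asserts only local rotational symmetry.
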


\begin{proof}[Proof of Theorem~\ref{theoremx}]
  Notice that \((\submanifold,g)\) is a locally conformally flat manifold, by
  Theorem 1.1 of~\cite{catino2012global} (see also~\cite{cao2012structure}).

  Denote the set of umbilical points in \(\submanifold\) by \(\mathcal{U}\) and
  its relative complement by \(\mathcal{V}\). Then, because the second
  fundamental form is continuous, \(\mathcal{U}\) is closed and \(\mathcal{V}\)
  is open in \(\submanifold\). If \(\submanifold\) is not totally umbilical,
  that is, if \(\mathcal{V}\neq\emptyset\), then for every
  \(x\in{\mathcal{V}\setminus{\left\{x_{0}\right\}}}\) there exists some
  \(\delta_{x}>0\) such that \(B(x,\delta_{x})\subset\mathcal{V}\).

  By Cartan-Schouten's classification of locally conformally flat manifolds, we
  know that the second fundamental form of \(\submanifold\) has two eigenvalues
  on \(\submanifold\setminus\{x_{0}\}\) whose multiplicities are \(n-1\) and
  \(1\). Without loss of the generality, we assume of
  \begin{equation*}
    k_{1}
    =
    \cdots
    =
    k_{n-1}
    =
    \rho
    \quad\text{and}\quad
    k_{n}
    =
    \mu\neq\rho.
  \end{equation*}
  On the other hand,
  \[
    \scalarcurvature
    =
    n(n-1)c+n^{2}\meancurvature^{2}
    -
    \norm{\shapeoperator}^{2}
    =
    n(n-1)c
    +
    (n-1)(2\mu+(n-2)\rho)\rho.
  \]
  From the scalar curvature assumption we have that
  \begin{equation*}
    k_{1}=\cdots=k_{n-1}=\rho\neq0
    \quad\text{and}\quad
    k_{n}=\mu\neq\rho,
  \end{equation*}
  on \(B(x,\delta)\). Therefore, from Theorem 4.2 of do Carmo and
  Dajczer~\cite{do1983rotation}, \(B(x,\delta)\) is a rotational hypersurface
  of \(\ambientspace{c}{n+1}\).
\end{proof}

\begin{theorem}\label{finalt}
  Let \((\submanifold,g,f,\lambda)\) be a complete shrinking or steady gradient
  Yamabe soliton isometrically immersed into \(\euclideanspace{n+1}\) and
  assume that \(f\) has no critical points. If one of the following conditions
  hold:
  \begin{enumerate}[a)]
    \item
      \(n=2\);
    \item
      \begin{enumerate}[i.]
        \item
          \(n>2\),
        \item
          \(\sup\left\{\norm{\shapeoperator_{x}}:x\in{\submanifold}\right\}<\infty\),
          and
        \item
          \(\submanifold\) is locally conformally flat,
      \end{enumerate}
  \end{enumerate}
  then \((\submanifold,g,f,\lambda)\) is a trivial soliton. Moreover,
  \((\submanifold,g)\) must be one of the following submanifolds of euclidean
  space \(\euclideanspace{n+1}\): \(\mathbb{R}^n\) or
  \(\euclideanspace{1}\times\roundsphere{n-1}\).
\end{theorem}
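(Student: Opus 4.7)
The strategy is to apply the Cao-Sun-Zhang structure theorem for gradient Yamabe solitons whose potential function has no critical points, which yields a global isometric splitting
\[
  (\submanifold, g) \cong (\reals, dr^{2}) \times_{h} (N^{n-1}, \bar{g}),
\]
where $h = \norm{\gradientfield{f}}$ depends only on the real parameter $r$, and $(N^{n-1}, \bar{g})$ has constant scalar curvature. Computing $\hessian{f}$ in this warped product and using the soliton equation $\hessian{f} = (\scalarcurvature - \lambda)g$ produces the identity $h'(r) = \scalarcurvature - \lambda$; in particular $\scalarcurvature$ is a function of $r$ alone. The plan is then to show that $h$ must be constant, whence $\scalarcurvature \equiv \lambda$ and $\hessian{f} \equiv 0$, so the soliton is trivial.

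In the case $n = 2$, $\submanifold$ is automatically locally conformally flat, and the Gaussian curvature $K = \scalarcurvature/2 \geqslant 0$ by Lemma~\ref{thb}. In the warped product, $K = -h''/h$, so $h'' \leqslant 0$; that is, $h$ is a positive concave function on all of $\reals$. Any such function must be constant: if $h'(r_{0}) \neq 0$ at some $r_{0}$, concavity would give $h(r) \leqslant h(r_{0}) + h'(r_{0})(r - r_{0})$, and the right-hand side would eventually cross zero as $r \to \mp\infty$ (depending on the sign of $h'(r_{0})$), violating $h > 0$. Hence $h \equiv h_{0}$ and the soliton is trivial. The resulting Riemannian product $(\reals, dr^{2}) \times (N^{1}, h_{0}^{2}\bar{g})$ is isometric to $\euclideanspace{2}$ if $N^{1} = \reals$, or to $\reals \times \roundsphere{1}$ if $N^{1} = \roundsphere{1}$.

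For $n > 2$, the same constancy of $h$ must be established using the LCF assumption together with $\sup\norm{\shapeoperator} < \infty$. The LCF hypothesis combined with the warped product decomposition forces $(N^{n-1}, \bar{g})$ to have constant sectional curvature $c_{N}$, and the soliton equation then produces the ODE
\[
  h' + \lambda = \frac{(n-1)(n-2)(c_{N} - (h')^{2})}{h^{2}} - \frac{2(n-1)h''}{h}.
\]
The bound $\sup\norm{\shapeoperator} < \infty$ renders $\scalarcurvature = n^{2}\meancurvature^{2} - \norm{\shapeoperator}^{2}$ (and hence $h'$) bounded on $\reals$. Combining this with the non-negativity $\scalarcurvature \geqslant 0$ from Lemma~\ref{thb} and the weak Omori-Yau principle for $\Delta_{f/(2(n-1))}$ from Lemma~\ref{tha}, applied for instance to $h$ itself, should force $h' \equiv 0$. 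Once $h$ is constant, $(\submanifold, g)$ is the Riemannian product $\reals \times (N^{n-1}, \bar{g})$ with $N^{n-1}$ a complete space form; the isometric immersion into $\euclideanspace{n+1}$ then identifies $N^{n-1}$ with $\euclideanspace{n-1}$ (steady case, $\submanifold = \euclideanspace{n}$) or with $\roundsphere{n-1}$ (shrinking case, $\submanifold = \reals \times \roundsphere{n-1}$).

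The principal obstacle will be establishing the constancy of $h$ in the case $n > 2$. The simple concavity argument available in dimension two has no direct analog when $\dim N = n - 1 \geqslant 2$; closing this step requires combining the soliton ODE above with the global constraints imposed by $\sup\norm{\shapeoperator} < \infty$, the non-negativity of $\scalarcurvature$, and the maximum-principle machinery developed in Section~\ref{sec:oy}.
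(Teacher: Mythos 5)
Your $n=2$ argument coincides with the paper's: since $\bar{\scalarcurvature}=0$ for a one-dimensional fiber, $\scalarcurvature=-2f'''/f'\geqslant 0$ makes $h=f'$ a positive concave function on all of $\reals$, hence constant, and the classification follows from flatness (the paper invokes Hartman--Nirenberg). The problem is the case $n>2$, where you correctly set up the warped product, the identity $f''=\scalarcurvature-\lambda$, and the second-order ODE, but stop exactly where the proof actually happens. Your proposed closing move --- applying the weak Omori--Yau principle ``to $h$ itself'' --- does not work as stated: that principle requires $\sup_{\submanifold}h<\infty$, and the hypothesis $\sup\norm{\shapeoperator}<\infty$ bounds $\scalarcurvature$ and hence $h'=\scalarcurvature-\lambda$, but gives no bound on $h=f'$. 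In the paper the maximum-principle machinery enters only through Lemma~\ref{thb} (to obtain $\scalarcurvature\geqslant 0$), never through a direct application to the warping function.

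The missing idea is the Cotton tensor. Local conformal flatness forces $C=0$, and computing $C_{1ab}$ and $C_{1a1}$ in the warped product shows that
\[
  Q=\frac{\scalarcurvature}{2(n-1)}+\frac{f'''}{f'}
\]
is a genuine constant --- information not contained in the soliton equation or in your second-order ODE. Substituting back into the scalar curvature formula produces the first integral $\bar{\scalarcurvature}-(n-1)(n-2)(f'')^{2}=2(n-1)Q(f')^{2}$, and the argument then splits on the sign of $Q$. For $Q=0$, $f''$ is constant, and positivity of $f'$ on all of $\reals$ forces $f''=0$. For $Q>0$, the non-constant solutions of the resulting ODE are trigonometric and change sign, again contradicting $f'>0$. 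For $Q<0$, one gets $f'''\leqslant Qf'<0$, so $f'$ is concave and hence constant, whence $\scalarcurvature=2(n-1)Q<0$, contradicting Lemma~\ref{thb}. In every case $f'$ is constant, the soliton is trivial, and the final identification of $\submanifold$ uses Hartman--Nirenberg ($Q=0$) or Cheng--Yau ($Q>0$). Without this first integral your case $n>2$ is a genuine gap, as you yourself acknowledge.
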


\begin{proof}[Proof of Theorem~\ref{finalt}]
  Let \(n=2\). It follows from~\eqref{ww2} that
  \begin{equation*}
    \begin{split}
      0\leqslant\scalarcurvature&=-\frac{2f'''}{f'}.
    \end{split}
  \end{equation*}
  Hence \(f'\) is weakly concave on \(\reals\) and as such \(f'\) is
  constant. Therefore, we get from~\eqref{secc} that \(M^{2}\) is flat.
  By the Hartman and Nirenberg’s theorem, we have that \(M^{2}\) is either
  a plane or a cylinder.

  Now, let \(n\geqslant{3}\). From the work~\cite{cao2012structure} of Cao,
  Sun and Zhang, we know that \((\submanifold,g)\) takes, up to isometries,
  the structure of a warped product
  \[
    (\submanifold,g)
    =
    (\mathbb{R},dr^{2})\times_{\norm{\gradientfield{f}}}(N^{n-1},\bar{g}),
    \quad
    f'=\norm{\gradientfield{f}}>0.
  \]
  Therefore, we have the following expressions ruling its sectional curvature
  \begin{equation}\label{secc}
    \begin{split}
      R_{1a1b}
      &=
      -f'f''\bar{g}_{ab},
      \\
      R_{1abc}
      &=0,
      \\
      R_{abcd}
      &=
      (f')^{2}\overline{R}_{abcd}
      +
      (f'f'')^{2}(\bar{g}_{ad}\bar{g}_{bc}
      -
      \bar{g}_{ac}\bar{g}_{bd}),
    \end{split}
  \end{equation}
  and Ricci tensor
  \begin{equation}\label{secc2}
    \begin{split}
      \riccitensor_{11}
      &=
      -(n-1)\frac{f'''}{f'},
      \\
      \riccitensor_{1a}
      &=
      0,
      \\
      \riccitensor_{ab}
      &=
      \ambientriccitensor_{ab}
      -
      [(n-2)(f'')^{2}+f'f''']\bar{g}_{ab},
    \end{split}
  \end{equation}
  for every \(a,b,c,d\in{\left\{2,\ldots,n\right\}}\). Thus, its scalar
  curvature is given by
  \begin{equation}\label{ww2}
    \begin{split}
      \scalarcurvature
      &=
      \frac{1}{(f')^{2}}\overline{\scalarcurvature}
      -
      (n-1)(n-2)\left(\frac{f''}{f'}\right)^{2}-2(n-1)\frac{f'''}{f'},
    \end{split}
  \end{equation}
  and, from~\eqref{secc},~\eqref{secc2} and~\eqref{ww2}, we get
  \begin{equation}\label{secc3}
    \begin{split}
      W_{1a1b}
      &=
      \dfrac{\bar{\scalarcurvature}}{(n-1)(n-2)}\bar{g}_{ab}
      -
      \dfrac{1}{n-2}\bar{R}_{ab},
      \\
      W_{1abc}
      &=
      0,
      \\
      W_{abcd}
      &=
      (f')^{2}\overline{W}_{abcd},
    \end{split}
  \end{equation}
  for its Weyl tensor components. The Cotton tensor is given by
  \begin{align*}
    C_{1ab}
    &=
    \riccitensor_{ab,1}
    -
    \riccitensor_{1b,a}
    -
    \frac{1}{2(n-1)}(\scalarcurvature_{,1}g_{ab}-\scalarcurvature_{,a}g_{1b})
    \\
    &=
    \riccitensor_{ab,1}
    -
    \dfrac{1}{2(n-1)}\scalarcurvature_{,1}g_{ab}
    \\
    &=
    \frac{\scalarcurvature_{,1}}{2(n-1)}g_{ab}
    +
    \left(\frac{f'''}{f'}\right)_{1}g_{ab}.
  \end{align*}
  Since \(\submanifold\) is locally conformally flat, we get that \(C=0\). Then
  \begin{equation}\label{ww1}
    \frac{\scalarcurvature}{2(n-1)}
    +
    \frac{f'''}{f'}=Q(x),
    \quad
    x\in{N^{n-1}}.
  \end{equation}
  Because
  \begin{align*}
    0=C_{1a1}
    &=
    \riccitensor_{a1,1}
    -
    \riccitensor_{11,a}
    -
    \dfrac{1}{2(n-1)}(\scalarcurvature_{,1}g_{a1}
    -
    \scalarcurvature_{,a}g_{11})
    =
    -
    \frac{\scalarcurvature_{,a}}{2(n-1)}
  \end{align*}
  we conclude that \(Q=Q(x)\) is constant on \(\submanifold\). From~\eqref{ww2}
  and~\eqref{ww1} we get that
  \begin{equation}\label{font}
    \overline{\scalarcurvature}
    -
    (n-1)(n-2)(f'')^{2}
    =
    2(n-1)Q(f')^{2}.
  \end{equation}

  \begin{description}
    \item[Case 1:] \(Q=0\).

      Notice that \(f'\) is a weakly concave function on \(\reals\). Indeed,
      from~\eqref{font}, \(f''\) is constant and so \(f'''=0\) on \(\reals\).
      Thus, \(f'\) must be a weakly concave function on \(\reals\) as claimed.
      Therefore, \(f'\) is constant and the soliton
      \((\submanifold,g,f,\lambda)\) is trivial.

      It follows from~\eqref{ww1} and~\eqref{font} that
      \(\scalarcurvature=\bar{\scalarcurvature}=0\). Then, from~\eqref{secc} we
      have \(R_{1a1b}=0,R_{1abc}=0\) and
      \(R_{abcd}=(f')^{2}\overline{R}_{abcd}\). If \(n>3\), then we get
      from~\eqref{secc3} that \(\bar{N}\) is Einstein with \(\bar{W}=0\), i.e.,
      \(\bar{N}\) must be a space form. In this case \(\bar{R}_{abcd}=0\) and
      \(\submanifold\) is flat. If \(n=3\), then
      \[
        \bar{R}_{abcd}
        =
        \dfrac{\bar{\scalarcurvature}}{2}(\bar{g}_{ad}\bar{g}_{bc}
        -
        \bar{g}_{ac}\bar{g}_{bd}).
      \]
      on \(\bar{N}^{2}\). Since \(\bar{\scalarcurvature}=0\) we conclude that
      \(\submanifold\) is flat. In both cases we conclude that the sectional
      curvature of \(\submanifold\) vanishes identically. By Hartman and
      Nirenberg’s theorem~\cite{hartman1959spherical} we know that
      \(\submanifold\) is \(\mathbb{R}^n\).

    \item[Case 2:] \(Q>0\).

      In this case we have the following ODE:
      \begin{equation}\label{cccc}
        \overline{\scalarcurvature}-(n-1)(n-2)(f'')^{2}-2(n-1)Q(f')^{2}=0.
      \end{equation}
      Then either \(f'\) is constant or else
      \[
        f'(t)
        =
        c_{1}\cos\left(t\sqrt{\frac{2Q}{n-2}}\right)
        +
        c_{2}\sin\left(t\sqrt{\frac{2Q}{n-2}}\right),
      \]
      where \(c_{1},c_{2}\) are any real constants subject to
      \(\overline{\scalarcurvature}=2(n-1)Q(c_{1}^{2}+c_{2}^{2})\). Since
      \(f'>0\), we conclude that \(f'\) is a constant. In this case
      \(\scalarcurvature-\lambda=f''=0\), \(\scalarcurvature>0\),
      \(\bar{\scalarcurvature}>0\) and \(\bar{N}^{n-1}\) is a space form with
      positive curvature. It follows from~\eqref{secc} that \(\submanifold\)
      has nonnegative curvature. From a classification due to Cheng
      Yau~\cite{cheng1977hypersurfaces} we have that \(\submanifold\) is
      \(\mathbb{R}\times\roundsphere{n-1}\).

    \item[Case 3:] \(Q<0\).

      It follows from~\eqref{ww1} that
      \[
        \frac{\scalarcurvature}{2(n-1)}
        =
        Q-\frac{f'''}{f'}
        \geqslant
        0,
      \]
      that is,
      \[
        f'''\leqslant{Qf'}<0.
      \]
      Hence, \(f'\) is weakly concave on \(\reals\) and \(f'\) so is constant.
      This is a contradiction, since \(0\leqslant\scalarcurvature=2(n-1)Q<0\)
      by~\eqref{ww1}.

  \end{description}
\end{proof}

\bibliography{publish.bib}
\bibliographystyle{plain}

\end{document}